\documentclass[12pt,reqno,a4paper]{amsart}  	
\usepackage[bottom = 4cm, top = 3.2cm, left = 3cm, right = 3cm]{geometry}

\usepackage{hyperref}

\geometry{a4paper}                   		
\usepackage{graphicx}				
\usepackage{yfonts}
\usepackage{tikz}
\usepackage{caption}
\captionsetup[figure]{labelformat=empty}
\usepackage{amsmath,amsfonts,amssymb,amsthm,epsfig,epstopdf,titling,url,array,mathtools}
\usepackage{sidecap}
\usepackage{float}
\usepackage[utf8]{inputenc}
\usepackage[english]{babel}
\usepackage{t1enc}
\usepackage{subcaption}
\usetikzlibrary{positioning,arrows,calc}								
\usepackage{wrapfig, framed, caption}
\usepackage[titletoc]{appendix}
\usepackage{enumitem}
\usepackage{faktor}
\usetikzlibrary{decorations.pathmorphing}


\def\F{\ensuremath\mathfrak{F}}
\def\Uf{\ensuremath\mathrm{Uf}}

\def\uf{\ensuremath\mathfrak{F}^{\mathfrak{ue}}}
\def\ue{\ensuremath\mathfrak{ue}}
\def\M{\ensuremath\mathfrak{M}}
\def\dom{\ensuremath\text{dom}}

\def\>{\ensuremath\rangle}
\def\<{\ensuremath\langle}

\newtheorem{thm}{\bfseries Theorem}[section]
\newtheorem{prop}[thm]{\bfseries Proposition}
\newtheorem{lem}[thm]{\bfseries Lemma}
\newtheorem{fact}[thm]{\bfseries Fact}
\newtheorem{cor}[thm]{\bfseries Corollary}
\theoremstyle{definition}  
\newtheorem{rem}[thm]{\bfseries Remark}
\newtheorem{defn}[thm]{\bfseries Definition}
\newtheorem{exmp}[thm]{\bfseries Example}

\usepackage{amssymb}

\usepackage{authblk}
\author{Zal\'{a}n Moln\'{a}r} 
\address{E\"otv\"os Lor\'and University, Department of Logic,  Budapest, Hungary}
\email{mozaag@gmail.com}

\title{Ultrafilter extensions of bounded graphs are elementary}
\date{}

\begin{document}
	\maketitle
	
	\noindent \textbf{Abstract.} The main motivation of this paper is the study of first-order model theoretic properties of structures having their roots in modal logic.  We will focus on the connections between  ultrafilter extensions and  ultrapowers.   We show that certain structures (called bounded graphs) are elementary substructures of their ultrafilter extensions, moreover  their modal logics coincide. \\
	
	\noindent \textbf{keywords:} ultrafilter extension, elementary substructures, bounded graphs.
	
	\section{Overview}
	
	There are two different approaches that are called ultrafilter extensions of structures (graphs) equipped with a single binary relation.  The recently introduced one has its origins in model theory \cite{Saveliev1}  and was further investigated in \cite{Saveliev0, Saveliev2, Saveliev3}, while the other  emerged from modal logics and universal algebra and was first introduced in \cite{jonsson}. The connections between  the extensions were initially investigated in \cite{Saveliev4}.  In this paper, we will focus on the second type of extension that has become a fundamental construction for understanding modal logics and their algebraic counterpart.  Our motivation stems from one of the central areas in model theory of modal  logics --  namely its interconnection with first-order logic. This topic has been tackled from various sides. Example of this include: correspondence theory,   \cite{correspondence, Sahlqvist},  definability  \cite{goldblatt, Hollenberg},  bounded fragments of first-order logic  \cite{hajnal}, canonicity \cite{ Benthem, goldblatt1, goldblatt3, fine}. Some of these connections are guided by the ``proportion'':
	\begin{align*}
		\frac{\text{Modal logic}}{\text{Bisimulation}} \approx 	\frac{\text{First-order logic }}{\text{Partial isomorphism }} 
	\end{align*}
	That is  ``\textit{bisimulations are to  modal logic, what partial isomorphism is for first-order logic}''. This was put forward  by de Rijke in \cite{rijke} and was further explicated through the investigation of bounded  fragments.  Later,  in  \cite{Venema} Venema  improved de Rijke's results by invoking the ``proportion'':
	\begin{align*}
		\frac{\text{Modal logic}}{\text{Ultrafilter extension}} \approx 	\frac{\text{First-order logic }}{\text{Ultrapowers}} 
	\end{align*}
	By introducing the construction called \textit{ultrafilter union}, Venema justified  the phrase  \textit{``taking an ultrafilter union of a set of models is the modal analogue of taking ultraproducts in the model theory of first-order logic''}.\footnote{We should mention that all of the upcoming studies can be adapted to ultrafilter unions and ultraproducts.} From these constructions one easily obtains the modal counterpart of the well-known Keisler-Shelah Theorem: Two (pointed) models are modally equivalent if and only if they have bisimilar ultrafilter extensions (cf. \cite{black} Theorem 2.62). 
	
	Our primary focus will be on the second ``proportion'' by studying the two constructions, and investigating how the resulted new structures can be related to each other. From this approach,  the following questions mainly issued by \cite{fine,deltasigma, Benthem, modclass} are very natural:
	\begin{enumerate}
		\item[•] What can be said about the \textit{modal theory} of the ultrapowers?
		
		\item[•] What can we say about the \textit{first-order theory} of the ultrafilter extensions?
		
		\item[•] What kind of \textit{modal formulas} are preserved under taking ultraflter extension?
		
		\item[•] What kind of connections can we have between the  ultrafilter extensions and ultrapowers?
	\end{enumerate}
	
	\noindent In general, it is not surprising that the ultrafilter extension, as opposed to ultrapowers, may fail to preserve first-order formulas.  As a result, the literature mostly posed and partially answered the question;  ``\textit{What type of first-order formulas are preserved under taking ultrafilter extensions?}'' \cite{correspondence, modclass, Benthem, goldblatt}. This turned out to be $\Pi_1^1$-hard, hence no such characterization can exist \cite{cate}.
	Also, similar questions  concerning the first type of ultrafilter extension were addressed in \cite{Saveliev0}  (cf. Problem 5.2, and  Theorem 5.3). Here we take the reverse approach, surprisingly  untouched by the literature and ask; \textit{``What type of structures are elementary equivalent to their ultrafilter extension?''}. 
	Hence, instead of formulas, we concentrate on structures, attempting to isolate classes for which ultrafilter extensions and ultrapowers share structural features.
	Our main results are proved in Section 4.   We show that for certain structures (called bounded graphs) their ultrafilter extension has a characteristic similar to \L o\'s's Lemma. As a result of this, they are found to be elementary substructures to their ultrafilter extension. Additionally,  their modal logics coincide.

	\section{Preliminary notions, background}
	
	\paragraph{Semantics}  Here we recall all the necessary notions from the semantics of modal logics and fix the notation. The reader who is more interested in this area is referred to \cite{Zakarcsibacsi, black, black2} and \cite{chang}. Throughout the paper we consider structures $\mathfrak{F}=\langle W, R\rangle$ to be \textit{directed graphs} for the first-order language $\mathcal{L}_R$ with equality, containing a single binary relation symbol  $R$.
	
	As opposed to the first-order relational language $\mathcal{L}_R$, the standard propositional modal language $\mathcal{L}_\diamondsuit$ is the extension of the propositional language by  a single  unary connective $\diamondsuit$.  A \textit{model} for $\mathcal{L}_\diamondsuit$ is a pair $\M=\langle \F, V\rangle$, where $\F$ is the underlying structure and $V:\Phi\to \mathcal{P}(W)$ is a function, called \textit{valuation}.  The relation $\M,w\Vdash \varphi$ reads as the truth of $\varphi\in  \mathsf{Form}(\mathcal{L}_\diamondsuit)$ in the model $\M$ at $w\in W$, and is defined in the usual recursive manner. The only non-obvious case is
	
	\begin{enumerate}
		\item[•] $\M,w\Vdash \diamondsuit\varphi \Leftrightarrow (\exists v\in W)Rwv$ and $\M,v\Vdash \varphi$.
	\end{enumerate}
	Moreover, $\M\Vdash\varphi$ denotes the fact that for all $w\in W$ we have $\M,w\Vdash \varphi$, and  $\F\Vdash \varphi$ abbreviates the statement that  for all valuation $V$,  the relation $\M\Vdash\varphi$ holds. Finally, the \textit{modal theory} (or \textit{logic}) of $\F$ is denoted by $\Lambda(\F)=\{\varphi\in\mathsf{Form} (\mathcal{L}_\diamondsuit): \F\Vdash\varphi\}$.  Since our motivation is to study the first-order structural and modal properties both appearing at the level of frames,  we confine ourselves into this segment of the semantics.
	\paragraph{Ultrafilter extension}  Let  $\F=\langle W,R\rangle$ be a structure and $X\subseteq W$. We define the following operations on $\mathcal{P}(W)$:
	\begin{align*}
		R^+(X) & = \{s\in W: \exists w(Rws\wedge w\in X)\}\\
		R^-(X) & = \{w\in W: \exists s(Rws\wedge s\in X)\}\\
		R(X) &= R^-(X) \cup R^+(X)\\
		l_R(X) & = \{w\in W : \forall v(Rwv\to v\in X)\}
	\end{align*}
	
	\noindent If $X=\{w\}$, we use the shorthand notation $R(w)$ instead of $R(\{w\})$ and similarly for the other operations. The \textit{ultrafilter} (or \textit{canonical}) \textit{extension} of $\mathfrak{F}$ is the structure $\uf = \langle \Uf(W),R^\ue\rangle$, where $\Uf(W)$ is the set of all ultrafilters over $W$ and the relation $R^\ue$ is defined as:
	\[R^\ue uv \Leftrightarrow (\forall X\in v)(R^-(X)\in u)\]
	It is easy to see that the following equivalence holds between the ultrafilter relation and  the operations defined above:
	\[R^\ue uv \Leftrightarrow\{Y: l_R(Y)\in u\}\subseteq v \Leftrightarrow \{R^+(X): X\in u\}\subseteq v.\]
	By $\pi_w =\{ X\subseteq W: w\in X\}$ we denote the principal ultrafilter generated by $w$, and $\Uf^*(W)$ denotes the set of non-principal ultrafilters. The  function $\eta: \mathfrak{F} \hookrightarrow\uf$, where  $\eta(w) = \pi_w$ is an embedding,  called the \textit{canonical embedding} (of $\mathfrak{F}$ into $\uf$).  For a model $\M=\langle \F, V\rangle$, its  ultrafilter extension is given by $\M^\ue =\langle \uf, V^\ue\rangle$, where $V^\ue(p) = \{u\in\Uf(W): V(p)\in u\}$. A well-known fact is the equivalence (cf. Proposition 2.59 from \cite{black}):
	\begin{align}\label{truth=member}
		\M^\ue, u\Vdash \varphi \Leftrightarrow V(\varphi)\in u,
	\end{align}
	
	\noindent for all $u\in \Uf(W)$ and $\varphi\in\mathsf{Form}(\mathcal{L}_\diamondsuit)$, from which it follows that  $\Lambda(\F)\subseteq \Lambda(\uf)$.

	\paragraph{Model theoretic notions} Using  standard notation, for $\mathfrak{F} =\langle W,R\rangle$, and $\mathfrak{F}' =\langle W',R'\rangle$ by   $\F \leq \F'$, we denote the fact that $\F$ is   a substructure of $\F'$ and  $\F\preceq \F'$ means that $\F$ is an elementary substructure of $\F'$.  Elementary equivalence is denoted by $\F\equiv \F'$.
	We recall that a set $\Gamma$ of first-order formulas in a language $\mathcal{L}$ with a free  variable $x$ is \textit{satisfied} in the structure $\mathfrak{F}=\langle W,R\rangle$, if there is a $w\in W$, such that $\F\models \varphi(w)$, for each $\varphi(x)\in \Gamma$. Then $\Gamma$ is \textit{finitely satisfiable} if each finite subset of $\Gamma$ is satisfied in $\F$. For a cardinal $\kappa$, the structure $\F$ is \textit{$\kappa$-saturated }if for each $A\in [W]^{<\kappa}$ and set of formulas $\Gamma$ in the extended language $\mathcal{L}\cup \{c_a: a\in A\}$ containing a free variable $x$, the set $\Gamma$ is satisfied, whenever $\Gamma$ is finitely satisfied. A well-known fact on saturation is the following theorem (cf. Theorem 6.1.8 in \cite{chang}):

	\begin{fact}
		For any structure $\F$ and cardinal $\kappa$ there exists a $\kappa$-saturated ultrapower of $\F$.
	\end{fact}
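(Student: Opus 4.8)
The plan is to deduce the statement from the existence of a single ultrafilter with strong combinatorial properties, so that one application of \L o\'s's Theorem suffices to realize every relevant type in a single ultrapower. Since the language $\mathcal{L}_R$ is finite, no constraints on the size of the language intervene, and the whole weight of the argument falls on the ultrafilter. The key notion is that of a \emph{countably incomplete, $\kappa$-good} ultrafilter: an ultrafilter $D$ over an index set $I$ is countably incomplete if there is a chain $I=I_0\supseteq I_1\supseteq\cdots$ of members of $D$ with $\bigcap_{n<\omega}I_n=\emptyset$, and it is $\kappa$-good if for every $\beta<\kappa$ each monotone map $f\colon[\beta]^{<\omega}\to D$ (monotone meaning $f(s)\supseteq f(t)$ whenever $s\subseteq t$) admits a \emph{multiplicative refinement}: a map $g\colon[\beta]^{<\omega}\to D$ with $g(s)\subseteq f(s)$ and $g(s\cup t)=g(s)\cap g(t)$.

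The first and decisive step is to fix an index set $I$ with $|I|=\kappa$ and to produce on it a countably incomplete, $\kappa$-good ultrafilter $D$; this is the Keisler--Kunen theorem, which holds in ZFC with no cardinal-arithmetic hypotheses (cf.\ \cite{chang}). I expect this to be the main obstacle of the entire argument. The construction proceeds by transfinite recursion of length $2^{\kappa}$: one enumerates all candidate monotone functions and builds $D$ as the union of an increasing chain of filters, at each stage adjoining a multiplicative refinement for the function currently under consideration while preserving the finite intersection property and retaining a fixed witness of countable incompleteness. Checking that the recursion never gets stuck, and that the final object is an honest ultrafilter rather than merely a filter, is the delicate combinatorial core.

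Granting such a $D$, I would then form the ultrapower $\prod_{D}\F=\langle W^{I}/D,\ R_{D}\rangle$, with $R_{D}([f],[g])$ holding iff $\{i\in I:R(f(i),g(i))\}\in D$, and verify that it is $\kappa$-saturated. Let $A\subseteq W^{I}/D$ have $|A|<\kappa$ and let $\Gamma(x)$ be a type over $A$ that is finitely satisfiable in $\prod_{D}\F$; enumerate it as $\{\varphi_\xi(x):\xi<\lambda\}$ with $\lambda<\kappa$. For finite $s\subseteq\lambda$, finite satisfiability together with \L o\'s's Theorem puts the set $X_s=\{i\in I:\F\models\exists x\bigwedge_{\xi\in s}\varphi_\xi^{(i)}(x)\}$ into $D$, where $\varphi_\xi^{(i)}$ is $\varphi_\xi$ with its parameters read off at coordinate $i$. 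The assignment $s\mapsto X_s$ is monotone, so $\kappa$-goodness supplies a multiplicative refinement $g$, and intersecting $g$ with the countable-incompleteness chain forces every coordinate $i$ to lie in only finitely many of the sets $g(\{\xi\})$. Picking, for each $i$, an element of $W$ that witnesses the finite conjunction indexed by $\{\xi:i\in g(\{\xi\})\}$ defines some $h\in W^{I}$, and a last appeal to \L o\'s's Theorem shows $[h]$ satisfies every $\varphi_\xi$, so $\Gamma$ is realized.

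Finally, the diagonal embedding identifies $\F$ with an elementary substructure of $\prod_{D}\F$, so the latter is the sought $\kappa$-saturated ultrapower. In short, the combinatorial difficulty is concentrated entirely in the existence of the good ultrafilter; once that is in place, the passage to $\kappa$-saturation is routine bookkeeping organized around \L o\'s's Theorem.
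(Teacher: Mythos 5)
The paper does not prove this Fact at all: it is quoted as Theorem 6.1.8 of Chang--Keisler, and your proposal is precisely the proof of that theorem --- the Keisler--Kunen existence theorem for countably incomplete $\kappa$-good ultrafilters (which indeed is the real content; Kunen's ZFC construction gives a countably incomplete $\kappa^{+}$-good ultrafilter over an index set of size $\kappa$, more than enough), followed by the multiplicative-refinement saturation argument. So your route coincides with the one the paper implicitly relies on via the citation, and your assessment of where the difficulty sits is accurate.

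There is, however, one step that fails as literally written: the order in which you bring in the countable-incompleteness chain. You first take a multiplicative refinement $g$ of the monotone map $s\mapsto X_s$ and only afterwards ``intersect $g$ with the chain.'' That does not yield the finiteness you need. If $i\in g(\{\xi_1\})\cap\dots\cap g(\{\xi_n\})$, multiplicativity gives $i\in g(\{\xi_1,\dots,\xi_n\})\subseteq X_{\{\xi_1,\dots,\xi_n\}}$, but nothing places this set inside $I_n$, so a single coordinate can lie in $g(\{\xi\})$ for infinitely many $\xi$ and then no choice of witness $h(i)$ is possible. Intersecting after the fact, say $g'(s)=g(s)\cap I_{|s|}$, does not repair this: one only gets $g'(s\cup t)\subseteq g'(s)\cap g'(t)$, and the inclusion $\bigcap_{j\le n}g'(\{\xi_j\})\subseteq I_n$ that the finiteness argument needs is exactly what is lost. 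The standard fix is to fold the chain in \emph{before} refining: set $f(s)=X_s\cap I_{|s|}$, which is still monotone since both factors are, and let $g$ be a multiplicative refinement of this $f$. Then $i\in g(\{\xi_1\})\cap\dots\cap g(\{\xi_n\})=g(\{\xi_1,\dots,\xi_n\})\subseteq f(\{\xi_1,\dots,\xi_n\})\subseteq I_n$, and since $\bigcap_{n<\omega}I_n=\emptyset$ each coordinate lies in only finitely many $g(\{\xi\})$; your witness-picking step and the final appeal to \L o\'s's Theorem then go through verbatim. With that one reordering the proof is correct.
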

	
	\paragraph{Ultrapower vs. ultrafilter extension} Our primary interest is in the potential connections between an ultraproduct of $\F$ and $\uf$. According to Lemma 3.8 from  \cite{Benthem},  $\uf$ is always a $p$-morphic image of a suitably saturated ultrapower of $\F$.  Also it is proved in \cite{modclass} (Theorem 15.11) and later generalized in  \cite{goldblatt0} (Theorem 4.2.5), that a first-order formula is preserved under $p$-morphisms if and only if it is equivalent to an  essentially positive formula (for the definition: \cite{goldblatt0}, Section 4).  Hence, essentially positive formulas are preserved under taking ultrafilter extensions. Another  important $\mathcal{L}_R$-fragment is the class of $r(u)$-formulas  introduced  by \cite{modclass} in Definition 15.17, where $u$ is a fixed variable. This class enjoys the following ``\L o\'s Lemma-like'' characterization: For any structure $\F$, $r(u)$-formula $\varphi(u,x_0,\dots, x_n)$ and $w_0,\dots, w_n\in W$, $v\in \Uf(W)$ we have,
	\begin{align}\label{1}
		\uf\models \varphi(v,\pi_{w_0},\dots,\pi_{w_n}) \text{ iff } \{s\in W: \F\models \varphi(s, w_0,\dots w_n)\}\in v.
	\end{align}
	Hence the existential closure of an $r(u)$-formula is preserved by passing from $\F$ to $\uf$. 
	
	The exact syntactic nature of formulas that are preserved under taking ultrafilter extensions is proved to be $\Pi_1^1$-hard by \cite{cate}. As a result, it is not surprising that elementarily equivalence is by far out of reach, as the  following  textbook example shows. 
	
	\begin{exmp}\label{ex}
		Let $\mathfrak{N}=\langle \mathbb{N}, < \rangle$, its ultrafilter extension can be illustrated as:
		\begin{center}
			\begin{tikzpicture}

				\node [below, black] at (-1.5,-1.5) {$\mathfrak{N}^\ue$};
				
				\draw[fill=black] (0,-2) circle [radius=0.05];
				\node [below, black] at (0,-2) {$\pi_0$};
				
				\draw[fill=black] (1,-2) circle [radius=0.05];
				\node [below, black] at (1,-2) {$\pi_1$};
				
				\draw[fill=black] (2,-2) circle [radius=0.05];
				\node [below, black] at (2,-2) {$\pi_2$};

				\node [black] at (3,-2) {$\dots$};

				\filldraw[fill=none, draw=black, thick, rounded corners, dashed] (3.5,-1.4) rectangle (6.5,-2.7);

				\draw[black,  -latex,thick] (0,-2) to [bend left] (1,-2);
				\draw[black,  -latex,thick] (0,-2) to [bend left] (2,-2);
				\draw[black,  -latex,thick] (0,-2) to [bend left] (3,-2);
				\draw[black,  -latex,thick] (0,-2) to [bend left] (3.5,-2);

				\draw[black,  -latex,thick] (1,-2) to [bend left] (2,-2);
				\draw[black,  -latex,thick] (1,-2) to [bend left] (3,-2);
				\draw[black,  -latex,thick] (1,-2) to [bend left] (3.5,-2);
				
				\draw[black,  -latex,thick] (2,-2) to [bend left] (3,-2);
				\draw[black,  -latex,thick] (2,-2) to [bend left] (3.5,-2);
				\draw[black,  -latex,thick] (3,-2) to [bend left] (3.5,-2);

				\node [black] at (5,-1.6) {{\small cluster of  }};
				\node [black] at (5,-2) {{\small non-principal }};
				\node [black] at (5,-2.4) {{\small  ultrafilters }};

			\end{tikzpicture}

		\end{center}
	\end{exmp}
	
	\noindent where for any ultrafilter $u\in \Uf(\mathbb{N})$ and $v\in \Uf^*(\mathbb{N})$ we have $u <^\ue v$.  Trivially $\mathfrak{N}\models \forall x( x\not < x)$, but $\mathfrak{N}^\ue\models \exists x( x < x)$, hence $\mathfrak{N}\not\equiv \mathfrak{N}^\ue
	$. \\
	
	Intuitively,  the failure was due to the fact that each  $n\in \mathbb{N}$ had infinitely many $<$-successors. In order to characterize subclasses e.g. whose elements are elementary equivalent to their ultrafilter extensions, it is natural to impose conditions on the relation in question. 
	
	\section{General properties on degree}

	In what follows we are going to present some basic properties on the branching of an ultrafilter extension.
	\begin{defn}
		Let $\F =\langle W,R\rangle$ be a structure.  We introduce the notation:
		\begin{enumerate}
			\item[•] $\deg^+(w) = | R^+(w)|$
			\item[•] $\deg^-(w) = |R^-(w)|$
			\item[•]$\deg(w) = \deg^+(w) +\deg^-(w)$
		\end{enumerate}
		Also,  $\F$ is
		\begin{enumerate}
			\item[•] \textit{ $\deg^+$-finite} (\textit{$\deg^-$-finite, $\deg$-finite}),  if for all $w\in W$, $\deg^+(w)<\omega$ (resp. $\deg(w),\deg^-(w)<\omega$).  
			\item[•]   \textit{$\deg^-$-bounded} (\textit{$\deg^+$-bounded, $\deg$-bounded}), if there is some $n\in \omega$, such that for all $w\in W$, $\deg^+(w) \leq n$ (resp. $\deg(w), \deg^-(w) \leq n$). 
		\end{enumerate}
		Finally,  $\F$ is \textit{locally finite} if $\F$ is $\deg$-finite, moreover  $\F$ is  \textit{bounded} if $\F$ is $\deg$-bounded.
	\end{defn}
	
	In basic modal logic,  the term \textit{image finite} is used for  $\deg^+$-finite structures that are of special importance, as they have the Hennessy-Milner property. That is given  image finite models $\M$, $\M'$,  for every  $w\in W$ and $w'\in W'$ we have $\{\varphi\in \mathsf{Form}(\mathcal{L}_\diamondsuit): \M,w\Vdash\varphi\} = \{\varphi\in \mathsf{Form}(\mathcal{L}_\diamondsuit): \M',w'\Vdash\varphi\}$ if and only if the two states $w$ and $w'$ are bisimilar (cf. \cite{black}).  Our main results, however,  will be concentrated on the bounded structures.
	
	\begin{defn}
		Let  $u_0,\dots, u_n$ be  \textit{different} ultrafilters over $W$. We say that the subsets $D_{u_0},\dots, D_{u_n} \subseteq W$ are \textit{distinguishing elements} for the ultrafilters $u_0,\dots, u_n$, if $D_{u_i}\cap D_{u_j} = \emptyset$, whenever $i\neq j \leq n$, and
		$D_{u_i}\in u_j$ iff $i=j$. 
	\end{defn}
	
	\begin{prop}\label{prop0} Fix any structure $\mathfrak{F}=\langle W,R\rangle$ and $u\in \Uf(W)$. If $\{w\in W: \deg^+(w) \leq n \}\in u$, then $\deg^+(u)\leq n$.
	\end{prop}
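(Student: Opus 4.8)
The plan is to argue by contradiction, exploiting the defining clause of $R^\ue$ together with the notion of distinguishing elements introduced just above. Recall that $\deg^+(u)$ counts the $R^\ue$-successors of $u$ in $\uf$, i.e. $\deg^+(u)=|\{v\in\Uf(W):R^\ue uv\}|$. Write $A=\{w\in W:\deg^+(w)\leq n\}$ and assume $A\in u$. Suppose toward a contradiction that $\deg^+(u)\geq n+1$, so there exist pairwise \emph{different} ultrafilters $v_0,\dots,v_n\in\Uf(W)$ with $R^\ue u v_i$ for every $i\leq n$.

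First I would produce distinguishing elements for $v_0,\dots,v_n$, that is, pairwise disjoint sets $D_0,\dots,D_n\subseteq W$ with $D_i\in v_j$ iff $i=j$. Since the $v_i$ are distinct, for each pair $i\neq j$ one may choose $E_{ij}\in v_i$ with $E_{ij}\notin v_j$; setting $C_i=\bigcap_{j\neq i}E_{ij}$ yields $C_i\in v_i$ and, by upward closure, $C_i\notin v_j$ for $j\neq i$. To force disjointness, put $D_i=C_i\cap\bigcap_{j\neq i}(W\setminus C_j)$. As $C_j\notin v_i$ gives $W\setminus C_j\in v_i$, each $D_i$ is a finite intersection of members of $v_i$, hence $D_i\in v_i$; and $D_i\subseteq C_i$ with $D_j\subseteq W\setminus C_i$ for $j\neq i$ give both $D_i\cap D_j=\emptyset$ and $D_i\notin v_j$. (If the existence of distinguishing elements is recorded as a separate lemma, I would simply invoke it here rather than repeat this computation.)

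With the $D_i$ in hand the conclusion follows from a short filter argument. For each $i$, from $R^\ue u v_i$ and $D_i\in v_i$ the defining clause $R^\ue uv\Leftrightarrow(\forall X\in v)(R^-(X)\in u)$ gives $R^-(D_i)\in u$. Since $u$ is a filter and $A\in u$, the set $A\cap\bigcap_{i\leq n}R^-(D_i)$ lies in $u$, and in particular is nonempty; fix a witness $w$ in it. For each $i\leq n$, membership $w\in R^-(D_i)$ yields some $s_i$ with $Rws_i$ and $s_i\in D_i$, and because the $D_i$ are pairwise disjoint the successors $s_0,\dots,s_n$ are distinct. Thus $w$ has at least $n+1$ distinct $R$-successors, so $\deg^+(w)\geq n+1$, contradicting $w\in A$. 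Hence $\deg^+(u)\leq n$.

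The only genuinely delicate point is the disjointness of the distinguishing sets: it is precisely what guarantees that the $n+1$ successors $s_i$ of $w$ are pairwise different, and therefore that the out-degree bound at the single point $w$ is actually violated. Everything else is upward closure of ultrafilters together with the definition of $R^\ue$.
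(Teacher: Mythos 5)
Your proposal is correct and follows essentially the same route as the paper: assume $\deg^+(u)>n$, fix distinguishing elements $D_i\in v_i$ for the successors, observe that $A\cap\bigcap_{i\leq n}R^-(D_i)\in u$, and extract a point of $A$ with $n+1$ distinct $R$-successors. The only difference is that you explicitly construct the distinguishing elements, a step the paper leaves implicit after its definition.
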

	\begin{proof}
		\noindent Let $X= \{w\in W: \deg^+(w) \leq n \}$. By way of contradiction, assume that $\deg^+(u)> n$. For $i\leq n$ let $v_i\in \Uf(W)$ be different ultrafilters, such that $R^\ue uv_i$ and fix some distinguishing elements $D_{v_i}\in v_i$.  Then $\bigcap_{i\leq n} R^-(D_{v_i})\cap X\in u$. Hence there is some $s\in X$ with $\deg^+(s)>n$, which is a contradiction. 
	\end{proof}

	\paragraph{Locally finite structures}
	
	Regarding the first constraint on the relation, we  present some  basic connections between the ultrafilter extension and ultrapower of locally finite structures in terms of generated substructures from the standpoint of modal logic.  A structure $\F'=\< W', R'\>$ is called a \textit{generated substructure} of $\F=\<W,R\>$ (notation: $\mathfrak{F'}\trianglelefteq \mathfrak{F}$), if $\F' \leq  \F$ and moreover, for all $w'\in W'$ and $v\in W$, $Rw'v$ implies $v\in W'$.

	\begin{prop}[Theorem 2.5 in \cite{fasi}]\label{8:1} $\mathfrak{F}\unlhd\uf$ if and only if $\mathfrak{F}$ is $\deg^+$-finite.
		\begin{proof}
			($\Leftarrow$) Suppose $R^\ue\pi_w v$. As $\mathfrak{F}$ is $\deg^+$-finite, there is  some $n\in \omega$ such that $R^+(w)=\{w_0,\dots, w_n\}$.   For any $X\in v$, since $w\in R^-(X)$,  there exists an  $i\leq n$, such that $w_i\in X$.  We claim that $v=\pi_{w_i}$, for some $i\leq n$. Otherwise $W\setminus R^+(w) =\bigcap_{i\leq n} (W\setminus \{w_i\})\in v$. However, for no $w_i\in R^+(w)$ we have $w_i\in W\setminus R^+(w) $, which is a contradiction.

			($\Rightarrow$) Suppose that there is some $w\in W$, such that $\deg^+(w) \geq \omega$. Consider a $v\in \Uf^*(W)$ with $R^+(w)\in v$. It is enough to show that $R^\ue \pi_w v$, as in this case $\mathfrak{F}\not\trianglelefteq\uf$.  Pick any $X\in v$, then $R^+(w)\cap X\in v$. Let $s\in R^+(w)\cap X$, then $Rws$ and $w\in R^-(X)$, that is  $R^-(X)\in\pi_w$. 
		\end{proof}
		
	\end{prop}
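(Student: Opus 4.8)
The plan is to unwind what $\F\trianglelefteq\uf$ means for the canonical embedding and to reduce both directions to a single principle about ultrafilters: an ultrafilter is principal exactly when it contains a finite set. Identifying $\F$ with its image under $\eta$, the substructure part $\F\leq\uf$ is already supplied by the fact that $\eta$ is an embedding, so the content of the statement is the generation condition, namely that whenever $R^\ue\pi_w v$ holds the successor $v$ is again principal. I would therefore phrase both implications as assertions about which ultrafilters can occur as $R^\ue$-successors of a principal one.

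For the ($\Leftarrow$) direction I would assume $\F$ is $\deg^+$-finite and suppose $R^\ue\pi_w v$. Reading off the definition of $R^\ue$, every $X\in v$ satisfies $w\in R^-(X)$, which forces each such $X$ to meet $R^+(w)$. The first step is to conclude $R^+(w)\in v$: otherwise $W\setminus R^+(w)$ would lie in $v$ and be a member disjoint from $R^+(w)$, contradicting what was just observed. Since $R^+(w)$ is finite by hypothesis, an ultrafilter containing it must be principal and generated by one of its finitely many points; concretely, were $v$ non-principal, each $W\setminus\{w_i\}$ would lie in $v$ and hence so would the finite intersection $W\setminus R^+(w)$, contradicting $R^+(w)\in v$. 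Thus $v=\pi_{w_i}$ for some $R$-successor $w_i$ of $w$, so the principal ultrafilters are closed under $R^\ue$-successors and $\F\trianglelefteq\uf$.

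For the ($\Rightarrow$) direction I would argue contrapositively: assuming $\F$ is not $\deg^+$-finite, fix $w$ with $R^+(w)$ infinite. Because every infinite set carries a non-principal ultrafilter, choose $v\in\Uf^*(W)$ with $R^+(w)\in v$. It then remains only to verify $R^\ue\pi_w v$, which is a direct computation: for any $X\in v$ the intersection $R^+(w)\cap X$ again lies in $v$, hence is nonempty, and any $s$ in it witnesses $Rws$ together with $s\in X$, so $w\in R^-(X)$, i.e. $R^-(X)\in\pi_w$. This exhibits a non-principal $R^\ue$-successor of a principal ultrafilter, so $\F\not\trianglelefteq\uf$.

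The only genuinely delicate point is the finiteness step in the ($\Leftarrow$) direction — the passage from ``every member of $v$ meets $R^+(w)$'' to ``$v$ is principal.'' This is exactly where $\deg^+$-finiteness is indispensable: the argument succeeds precisely because a finite intersection of the complements $W\setminus\{w_i\}$ remains in the ultrafilter, and this finite bookkeeping is what breaks down once $R^+(w)$ is infinite, as the ($\Rightarrow$) direction then makes explicit. Everything else is a routine unwinding of the definition of $R^\ue$.
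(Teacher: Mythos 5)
Your proposal is correct and takes essentially the same route as the paper: in both, the ($\Leftarrow$) direction rests on the finite-intersection observation that if $v\neq\pi_{w_i}$ for every $w_i\in R^+(w)$ then $W\setminus R^+(w)=\bigcap_{i\leq n}(W\setminus\{w_i\})\in v$, contradicting the fact that every member of $v$ meets $R^+(w)$, and the ($\Rightarrow$) direction is the identical computation exhibiting a non-principal $v\in\Uf^*(W)$ with $R^+(w)\in v$ as an $R^\ue$-successor of $\pi_w$. Your intermediate step of first establishing $R^+(w)\in v$ before invoking principality is merely a cosmetic reorganization of the paper's one-step contradiction, not a different argument.
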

	
	\noindent  Now we have the following simple characterization via generated substructures.
	
	\begin{thm}\label{8:7} Let $\mathfrak{F}=\langle W, R\rangle$ be a structure, $|I|\geq \aleph_0$ and $D$ be an $\aleph_1$-incomplete ultrafilter over  $I$. The following are equivalent:
		\begin{enumerate}
			\item[(i)] $\mathfrak{F}$ is $\deg^+$-finite,
			\item[(ii)]$\mathfrak{F}\unlhd \uf$,
			\item[(iii)] $\mathfrak{F}\unlhd \, ^I\mathfrak{F}/D$.
		\end{enumerate}
		
		\begin{proof} 
			(i) $\Rightarrow$ (iii) Suppose that  $\deg^+(w)<\aleph_0$ for all $w\in W$.  In order to avoid cumbersome notation let us denote  $^I\mathfrak{F}/D=\langle W^*, R^*\rangle$ and use  $[c_w]_D\in W^*$  to denote the diagonal element for  $w\in W$. Assume, for the sake of contradiction, that $\mathfrak{F}\not\trianglelefteq  \, ^I\mathfrak{F}/D$. Hence, there is some $[c_w]_D,[f]_D\in\,W^*$, such that  $[f]_D\neq [c_v]_D$ for all $v\in W$, but  $R^*[c_w]_D[f]_D$.  Then $\{i\in I:  Rwf(i)\}\in D$ and for all $v\in R^+(w)$  we have that $\{i\in I:  v\neq f(i)\}\in D$, as $[c_v]_D \neq [f]_D$.  Since $R^+(w)$ is finite, $\emptyset= \bigcap_{v\in R^+(w)}\{i\in I:  v\neq f(i)\}\cap\{i\in I:  Rwf(i)\}\in D$, which is a contradiction.
			
			(iii) $\Rightarrow$ (i) Let $\mathfrak{F}\unlhd\, ^I\mathfrak{F}/D$ and again assume, by way of contradiction, that $\deg^+(w)\geq \aleph_0$,  for some $w\in W$. Consider an enumeration of $R^+(w)=\{v_\alpha : \alpha <\beta\}$.  We construct an $[f]_D\in W^*$, such that  $[f]_D\neq [c_v]_D$ for all $v\in W$,  but  $R^*[c_w]_D[f]_D$.   As $D$ is $\aleph_1$-incomplete,  there exists $\{I_n: n\in\omega\}\subseteq D$, such that $I_0=I$, $I_n \supseteq I_{n+1}$, and $\bigcap_{n<\omega} I_n =\emptyset$. Hence, for all $i\in I$ there is an $m(i)\in \omega$, such that $i\in I_{m(i)}$ and for all $n>m(i)$, $i\not\in I_n$. We define $f\in\, ^IW$ as follows: let  $f(i) = v_{m(i)}$, where $v_{m(i)}\in R^+(w)$.  Then for each $v_\alpha\in R^+(w)$, we have $[f]_D\neq [c_{v_\alpha}]_D$ because of the following reason. Pick $v_\alpha\in R^+(w)$.  If $\omega\leq \alpha <\beta$, then $\{i\in I: f(i)=v_\alpha\}=\emptyset$ by definition of $f$. On the other hand, for $\alpha< \omega$ we have two cases. If $v_\alpha\neq v_{m(i)}$ for any $i\in I$, then again by definition of $f$ we obtain  $\{i\in I: f(i)=v_\alpha\}=\emptyset$. Otherwise $v_\alpha = v_{m(i)}$ for some $i\in I$.   Then $\{i\in I: f(i)=v_{m(i)}\}\subseteq I_{m(i)}\setminus I_{m(i)+1}\not\in D$. However, we have $R^*[c_w]_D[f]_D$, as $\{i\in I: Rwf(i) \}= I\in D$, contradicting to $\mathfrak{F}\unlhd  \, ^I\mathfrak{F}_{/D}$.
			
			The proof of (i) $\Leftrightarrow$ (ii) is Proposition \ref{8:1}.
		\end{proof}
	\end{thm}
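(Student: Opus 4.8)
The plan is to prove the cycle by establishing (i)~$\Rightarrow$~(iii) and (iii)~$\Rightarrow$~(i) directly, and to invoke Proposition~\ref{8:1} verbatim for (i)~$\Leftrightarrow$~(ii). Writing $^I\mathfrak{F}/D = \langle W^*, R^*\rangle$ and $[c_w]_D$ for the diagonal image of $w\in W$, I would first record the two facts that make both directions tractable: by \L o\'s's theorem the diagonal map $w \mapsto [c_w]_D$ is always an embedding, so the substructure clause of $\unlhd$ is automatic and only the generated-ness clause needs checking; and $R^*[g]_D[f]_D$ holds iff $\{i\in I : R\, g(i)\, f(i)\}\in D$. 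Consequently $\mathfrak{F}\unlhd\,{}^I\mathfrak{F}/D$ is exactly the statement that every $R^*$-successor of a diagonal element $[c_w]_D$ is again diagonal, and this is the single condition I will verify (resp.\ violate) in the two implications.

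For (i)~$\Rightarrow$~(iii) I would take any $[f]_D$ with $R^*[c_w]_D[f]_D$, so that $\{i : R\, w\, f(i)\}\in D$. Finiteness of $R^+(w) = \{w_0,\dots,w_n\}$ gives $\{i : f(i)\in R^+(w)\}\in D$, and this set partitions into the finitely many fibres $\{i : f(i)=w_j\}$, $j\leq n$; since $D$ is an ultrafilter one fibre lies in $D$, so $[f]_D = [c_{w_j}]_D$ is diagonal and the image of $\mathfrak{F}$ is generated. It is worth noting that this direction uses only that $D$ is an ultrafilter, not its $\aleph_1$-incompleteness.

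For (iii)~$\Rightarrow$~(i) the plan is to argue contrapositively: assuming $\deg^+(w)\geq\aleph_0$ for some $w$, I would manufacture a non-diagonal $R^*$-successor of $[c_w]_D$, and here the $\aleph_1$-incompleteness of $D$ carries the weight. Fix an injective enumeration $R^+(w) = \{v_\alpha : \alpha<\beta\}$ with $\beta\geq\omega$, together with a decreasing chain $I = I_0\supseteq I_1\supseteq\cdots$ of members of $D$ with $\bigcap_{n<\omega} I_n = \emptyset$; then each $i\in I$ has a largest index $m(i)\in\omega$ with $i\in I_{m(i)}$. Putting $f(i) = v_{m(i)}$ yields $\{i : R\, w\, f(i)\} = I\in D$, so $R^*[c_w]_D[f]_D$. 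On the other hand, for every $v\in W$ the fibre $\{i : f(i)=v\}$ is empty unless $v = v_\alpha$ with $\alpha<\omega$, in which case it equals $I_\alpha\setminus I_{\alpha+1}$, a set disjoint from the $D$-member $I_{\alpha+1}$ and hence outside $D$. Thus $[f]_D\neq[c_v]_D$ for all $v$, and $[f]_D$ witnesses $\mathfrak{F}\not\unlhd\,{}^I\mathfrak{F}/D$.

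The main obstacle is precisely this last construction: one must produce a single $f$ whose value at each coordinate lands in $R^+(w)$ (forcing it to be a successor of $[c_w]_D$) while disagreeing $D$-almost-everywhere with every constant $c_v$ (forcing non-standardness). The decreasing family furnished by $\aleph_1$-incompleteness is exactly what lets $f$ ``drift'' through infinitely many successors, so that no single successor is attained on a $D$-large set. Without this hypothesis the argument genuinely breaks down — for a principal or $\aleph_1$-complete $D$ the ultrapower can collapse onto $\mathfrak{F}$ and the implication fails — so I expect the careful bookkeeping of the fibres $I_\alpha\setminus I_{\alpha+1}$, rather than any of the surrounding model theory, to be the delicate point.
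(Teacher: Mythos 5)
Your proposal is correct and follows essentially the same route as the paper: the same fibre/finiteness argument for (i)~$\Rightarrow$~(iii) (you phrase it directly via a $D$-large fibre where the paper argues by contradiction, but it is the same computation), the identical ``drifting'' function $f(i)=v_{m(i)}$ built from the $\aleph_1$-incompleteness chain for (iii)~$\Rightarrow$~(i), and the appeal to Proposition~\ref{8:1} for (i)~$\Leftrightarrow$~(ii). No gaps; your explicit insistence on an injective enumeration of $R^+(w)$ even tidies a point the paper leaves implicit.
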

	\begin{rem}
		We note that  the  proposition above  admits an immediate dual characterization on $\deg^-$ due to Theorem 2 from \cite{Saveliev4}, according to which,
		\[(R^{-1})^\ue =(R^\ue)^{-1}.\]
	\end{rem} 
	However promising the locally finite structures are,  their  first-order model theoretic connection to ultrafilter extensions is doomed to fail at a very first level.\footnote{We are grateful to the referee for pointing out the next example.}  Consider the structure $\mathfrak{N}= \biguplus_{n\in\omega}\mathfrak{N}_n$, the disjoint union of $\mathfrak{N}_n=\langle n, <\rangle$, for all $n\in \omega$:
	
	\begin{center}

		\begin{tikzpicture}
			\filldraw[fill=white, draw=black,, thick,rounded corners] (-1,.5) rectangle (3,-3.5);
			
			\draw[fill=black] (0,0) circle [radius=0.05];
			\node[black, below] at (0,0) {$0$};
			
			\draw[fill=black] (0,-1) circle [radius=0.05];
			\node[black, below] at (0,-1) {$0$};
			
			\draw[fill=black] (1,-1) circle [radius=0.05];
			\node[black, below] at (1,-1) {$1$};
			
			\draw[fill=black] (0,-2) circle [radius=0.05];
			\node[black, below] at (0,-2) {$0$};
			
			\draw[fill=black] (1,-2) circle [radius=0.05];
			\node[black, below] at (1,-2) {$1$};
			
			\draw[fill=black] (2,-2) circle [radius=0.05];
			\node[black, below] at (2,-2) {$2$};
			
			\node[black, above] at (0,-3.2) {$\vdots$};

			\draw[black, thick, ->] (0,-1) to [bend left ] (1,-1);
			\draw[black, thick, ->] (0,-2) to [bend left ] (1,-2);
			\draw[black, thick, ->] (1,-2) to [bend left ] (2,-2);
			\draw[black, thick, ->] (0,-2) to [bend left ] (2,-2);

		\end{tikzpicture}

	\end{center}

	\begin{prop}$\mathfrak{N}^\ue$ has reflexive points, hence $\mathfrak{N}\not\equiv\mathfrak{N}^\ue$.
		\label{prop3.5}
		\begin{proof} 
			Let $F$ be the family of choice functions of the form $f: \omega\to\bigcup_{n\in\omega} \mathfrak{N}_n$, and let $X_f = \mathfrak{N}\setminus \{f(n): n\in \omega\}$. Then the set $u^* =\{X_f : f\in F\}$ has F.I.P., and any ultrafilter extending $u^*$ is reflexive.  
		\end{proof}
	\end{prop}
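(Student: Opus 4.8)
The plan is to exhibit an ultrafilter $u\in\Uf(W)$ on the domain $W$ of $\mathfrak{N}$ that is a \emph{reflexive point}, i.e.\ $R^\ue uu$. Unwinding the definition of $R^\ue$, this amounts to the requirement that $R^-(X)\in u$ for every $X\in u$. Once such a $u$ is produced, the ``hence'' is immediate: each component $\mathfrak{N}_n$ is a strict order, so $\mathfrak{N}\models\forall x\,\neg(x<x)$, whereas the reflexive point witnesses $\mathfrak{N}^\ue\models\exists x\,(x<x)$; the two structures disagree on a first-order sentence and therefore $\mathfrak{N}\not\equiv\mathfrak{N}^\ue$.

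To build $u$, I would force it to spread across all of the components while always leaving room to move upward. Concretely, let $F$ be the set of choice functions $f$ selecting one point $f(n)\in\mathfrak{N}_n$ from each (nonempty) component, and set $X_f=W\setminus\{f(n):n\in\omega\}$, the co-transversal obtained by deleting a single point from every component. First I would verify that $\{X_f:f\in F\}$ has the finite intersection property: a finite intersection $\bigcap_{j\le m}X_{f_j}$ deletes at most $m$ points from each component, so every component $\mathfrak{N}_n$ with $n>m$ still contributes points and the intersection is nonempty. Hence this family extends to an ultrafilter $u$.

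The hard part is checking reflexivity, i.e.\ that $R^-(X)\in u$ for every $X\in u$, and here the decisive structural observation is that, because each component is a \emph{finite linear order}, the maximal $X$-points form a transversal. Suppose toward a contradiction that $R^-(X)\notin u$ for some $X\in u$; then $Y:=X\cap(W\setminus R^-(X))\in u$. An element of $Y$ is a point of $X$ with no $R$-successor inside $X$, that is, a maximal element of $X\cap\mathfrak{N}_n$ in its own component; since $X\cap\mathfrak{N}_n$ is a finite chain it has a unique maximum, so $Y$ meets each component in at most one point. I can therefore pick $f\in F$ with $f(n)$ equal to that unique point whenever it exists (and arbitrary otherwise), which gives $Y\subseteq\{f(n):n\in\omega\}$ and hence $Y\cap X_f=\emptyset$. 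But $Y\in u$ and $X_f\in u$ would force $\emptyset\in u$, a contradiction. Thus $R^-(X)\in u$ for all $X\in u$, so $u$ is reflexive.

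I expect the main subtlety to be exactly this ``at most one point per component'' step: it is what allows a single co-transversal $X_f$ to annihilate $Y$, and it is the only place where the finiteness and linearity of the components is genuinely used. The remaining bookkeeping — that $\mathfrak{N}_0$ is empty and that small components are simply ignored when defining the choice functions — is routine.
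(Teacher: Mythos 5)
Your proposal is correct and is essentially the paper's own argument: the same family of co-transversals $X_f=W\setminus\{f(n):n\in\omega\}$ with the finite intersection property, and any ultrafilter extending it is a reflexive point. You merely spell out the details the paper leaves implicit (the F.I.P.\ count and the key observation that the $X$-maximal points form a partial transversal, so some $X_f$ kills the set $X\setminus R^-(X)$), and these verifications are exactly right.
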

	\section{Ultrafilter extension of bounded graphs}

	\noindent From now on, by  $\mathfrak{F}=\langle W,R\rangle$ we will always understand a bounded structure/graph.  These structures were intensively studied in modal logics under the title \textit{bounded alternatives} \cite{kracht, bellissima}.  Our final goal  is to prove $\F\preceq\uf$, which will be established via some ultrapower $^I\F/D$ such that $\F\preceq \uf$ and $\uf\preceq\, ^I\F/D$. Before doing so, we motivate the class of bounded structures by the following warming up example.
	\begin{exmp}
		Let $\F =\langle \mathbb{N}, S\rangle$, where the graph of $S$ is the successor operator. There is an ultrapower $^I\F/D$ of $\F$ such that $^I\F/D\cong \uf$.
		\begin{proof}
			It is not hard to calculate from first principles that for each $u\in \Uf^*(\mathbb{N})$ there is exactly one pair $u^-,u^+\in \Uf^*(\mathbb{N})$ such that $u^-S^\ue u$ and $uS^\ue u^+$, hence $S^\ue$ remains injective. Also, one can show that $S^\ue$ remains  cycle-free. By Pospí\v{s}il's Theorem $|\Uf(\mathbb{N})|= 2^{2^{\aleph_0}}$ and the structure $\uf$ consists of $\<\mathbb{N},S\>$ and a disjoint union of $2^{2^{\aleph_0}}$-many copies of $\<\mathbb{Z}, S\>$. Hence considering an index set $I$ of  cardinality ${2^{\aleph_0}}$ and a regular ultrafilter $D$ over $I$, by standard results (cf. Proposition 4.3.7 of \cite{chang}), the ultrapower $^I\F/D$ will be of cardinality $2^{2^{\aleph_0}}$, and so consequently $^I\F/D\cong \uf$.
		\end{proof}
	\end{exmp}
	
	\paragraph{``\L o\'s Lemma-like'' properties}  Let $\mathsf{K}$ be a class of structures.  Starting from the equivalence of (\ref{1}), we say  that a formula with one free variable $\varphi(x)$ in the language $\mathcal{L}_R$ has the \textit{\L o\'s Lemma-like property}  within $\mathsf{K}$, if for all $\F\in \mathsf{K}$ we have,
	\begin{align}
		\uf\models \varphi(u) \Leftrightarrow \{w\in W: \uf\models\varphi(w) \}\in u,
	\end{align}
	\noindent for all $u\in \Uf(W)$. In what follows, we investigate basic \L o\'s Lemma-like properties within the class of bounded structures.

	The next proposition  is a straightforward application of the de Bruijn-Erdős Theorem, however it  serves as a key observation in proving Theorem \ref{thm2}. Although the usual formulation  concerns undirected graphs, it can be adopted to directed ones. If $\< G,E\>$ is  directed, then a $k$-coloring of $G$ is a function $c: G \to \{0,\dots, k-1\}$ such that for all $x\neq y\in G$
	if $\<x,y\>\in E$,  then $c(x)\neq c(y)$, i.e.  reflexive loops do not ruin the coloring. Also, it is easy to see by induction, that whenever a  finite directed graph $\<G,E\>$ has $\deg(G)\leq k$, then it is $k+1$ colorable.

	\begin{prop}\label{prop1}
		Let $\mathfrak{F} =\langle W,R\rangle$ be a bounded graph. Then for all $u\in\Uf(W)$ we have that $\uf\models Ruu$ iff $\{w\in W: \F\models Rww\}\in u$.
	\end{prop}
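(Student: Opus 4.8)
The proof will use the bound on degree together with the de Bruijn--Erd\H{o}s Theorem to transfer the reflexivity information between $\F$ and $\uf$. First I would record the easy direction. Suppose $X=\{w\in W:\F\models Rww\}\in u$. I want to show $\uf\models Ruu$, i.e. $R^\ue uu$, which by definition means that for every $Y\in u$ we have $R^-(Y)\in u$. Given $Y\in u$, the set $X\cap Y$ is in $u$, and every $w\in X\cap Y$ satisfies $Rww$ with $w\in Y$, so $w\in R^-(Y)$; thus $X\cap Y\subseteq R^-(Y)$, giving $R^-(Y)\in u$. Hence $R^\ue uu$. This direction needs only that $\F$ is a structure, not boundedness.

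\medskip

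\noindent The hard direction is the contrapositive: assuming $X=\{w\in W:\F\models Rww\}\notin u$, I want to show $\uf\not\models Ruu$, i.e. $\neg R^\ue uu$. Let $N=W\setminus X=\{w\in W:\neg Rww\}\in u$; on $N$ the relation $R$ is irreflexive. The plan is to produce a finite partition $N=C_0\cup\dots\cup C_k$ of $N$ into $R$-independent pieces (no $R$-edge within a single piece), and then use that exactly one block $C_j$ lies in $u$ to defeat $R^\ue uu$. Indeed, if $C_j\in u$ is an $R$-independent set, then for all $w,v\in C_j$ we have $\neg Rwv$, so no $w\in C_j$ belongs to $R^-(C_j)$; hence $R^-(C_j)\cap C_j=\emptyset$, so $R^-(C_j)\notin u$ while $C_j\in u$, witnessing $\neg R^\ue uu$.

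\medskip

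\noindent The existence of such a finite independent partition is exactly where boundedness and de Bruijn--Erd\H{o}s enter, and this is the main obstacle. Since $\F$ is $\deg$-bounded, there is some $k$ with $\deg(w)\le k$ for all $w$, and the induced subgraph on the irreflexive part $N$ still satisfies $\deg\le k$. By the remark preceding the proposition, every \emph{finite} subgraph of $\<N,R\restriction N\>$ has degree at most $k$ and is therefore $(k+1)$-colorable. The de Bruijn--Erd\H{o}s Theorem then upgrades this to the whole (possibly infinite) graph $\<N,R\restriction N\>$, yielding a $(k+1)$-coloring $c\colon N\to\{0,\dots,k\}$. Setting $C_j=c^{-1}(j)$ gives the desired partition of $N$ into $k+1$ many $R$-independent sets. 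Finally, because $u$ is an ultrafilter and $N=C_0\cup\dots\cup C_k\in u$ is a finite union, exactly one block $C_j\in u$, and the argument above applies. I would take care that the coloring condition for directed graphs ignores reflexive loops, which is harmless here since we have restricted to the irreflexive part $N$.
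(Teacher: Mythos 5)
Your proposal is correct and follows essentially the same route as the paper: the forward direction is handled by the contrapositive, restricting to the irreflexive part, using boundedness to get $(k+1)$-colorability of finite subgraphs, upgrading via de Bruijn--Erd\H{o}s, and then using the color class lying in $u$ to produce a set $C_j\in u$ with $R^-(C_j)\cap C_j=\emptyset$, contradicting $R^\ue uu$. The only difference is that you spell out the easy direction and the loop-handling caveat, which the paper leaves implicit.
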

	\begin{proof}
		($\Leftarrow$)  Holds trivially for any ultrafilter extension.\\
		($\Rightarrow$) Let $X =\{w\in W: \F\not\models Rww\}$, and  assume that $X \in u$. As $\mathfrak{F}$ is bounded, there is some $m\in \omega$, such that $\deg(w) \leq m$, for all $w\in W$.  Hence, for all finite $H\subseteq X$, the graph $\langle H, {R \upharpoonright H}\rangle$ is $m+1$-colorable. Therefore, the graph $\langle  X,  {R \upharpoonright  X}  \rangle$ is $m+1$-colorable too by the de Bruijn-Erdős Theorem. Such an $m+1$ coloring gives an $m+1$ partition of $\bigcup_{i\leq m}C_i =  X$, and say $C_k\in u$.  Then $R^-(C_k)\cap C_k = \emptyset$, hence $\uf\not\models Ruu $.
	\end{proof}
	
	\begin{prop}\label{prop2}
		Let		$\mathfrak{F}=\langle W,R\rangle$ be a bounded structure, then
		\[ \deg^+(u) = n \text{ if and only if } \{w\in W: \deg^+(w) = n \}\in u.\]
	\end{prop}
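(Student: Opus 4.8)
The plan is to split the biconditional into the two inequalities $\deg^+(u)\le n$ and $\deg^+(u)\ge n$, after first observing that the forward direction is essentially free. Writing $A_j=\{w\in W:\deg^+(w)=j\}$ and using that $\F$ is bounded by some $m\in\omega$, the family $\{A_j:j\le m\}$ is a finite partition of $W$, so exactly one $A_{n'}$ lies in $u$. Hence it suffices to prove the backward direction $A_{n}\in u\Rightarrow \deg^+(u)=n$: applying it to $n'$ shows $\deg^+(u)=n'$, after which $\deg^+(u)=n$ forces $n'=n$, i.e. $A_n\in u$. For the backward direction the upper bound is immediate from Proposition~\ref{prop0}, since $A_n\subseteq\{w:\deg^+(w)\le n\}\in u$ already gives $\deg^+(u)\le n$.

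All the content therefore sits in the lower bound, which I would isolate as the claim that if $\{w:\deg^+(w)\ge k\}\in u$ then $\deg^+(u)\ge k$ (to be applied with $k=n$, noting $A_n\subseteq\{w:\deg^+(w)\ge n\}$). To prove it, recall from the preliminaries that $R^\ue u v$ holds iff $\{R^+(X):X\in u\}\subseteq v$, so the successors of $u$ are precisely the ultrafilters extending the filter $\mathcal F_u$ generated by $\{R^+(X):X\in u\}$. Let $N=\deg^+(u)$, which is finite by Proposition~\ref{prop0} (as $\{w:\deg^+(w)\le m\}=W\in u$), and let $v_1,\dots,v_N$ be the successors, equipped with pairwise disjoint distinguishing sets $D_i\in v_i$. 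Since $v_1,\dots,v_N$ are all the ultrafilters extending $\mathcal F_u$, the standard fact that a filter is the intersection of the ultrafilters above it yields $\mathcal F_u=\bigcap_{i} v_i$; I will exploit this to pull set-membership back from the $v_i$ to $u$.

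The main obstacle is that a single vertex $w$ may send several edges into the same $D_i$, so that $N$ blocks need not force $\deg^+(w)\le N$. I would remove this by thinning each $D_i$ inside $v_i$. For fixed $i$, consider the graph $G_i$ on $D_i$ joining $s\neq s'$ whenever they have a common $R$-predecessor in $B:=\{w:\deg^+(w)\ge k\}$; boundedness gives $\deg_{G_i}\le m(m-1)$, so by the de Bruijn--Erdős Theorem $G_i$ is finitely colourable, and the ultrafilter $v_i$ selects one colour class $D_i'\in v_i$, which is $G_i$-independent. Setting $D'=\bigsqcup_i D_i'$ we still have $D'\in v_i$ for every $i$, hence $D'\in\bigcap_i v_i=\mathcal F_u$; unravelling the generators of $\mathcal F_u$ (using $R^+(Y_1\cap\dots\cap Y_j)\subseteq\bigcap_l R^+(Y_l)$) and intersecting with $B$ produces some $X\in u$ with $R^+(X)\subseteq D'$. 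Finally, for any $w\in X$ every successor of $w$ lies in $D'$, and independence of each $D_i'$ forces $|R^+(w)\cap D_i'|\le 1$; since the $D_i'$ are pairwise disjoint, $\deg^+(w)=\sum_{i=1}^N|R^+(w)\cap D_i'|\le N$. As $w\in X\subseteq B$ gives $\deg^+(w)\ge k$, we conclude $N\ge k$, completing the lower bound and hence the proposition. The step I expect to be most delicate is checking that the thinning of $D_i$ to $D_i'$ preserves the coverage $R^+(X)\subseteq D'$ — this is exactly why I deduce $D'\in\mathcal F_u$ and extract $X$ \emph{after} the colouring, rather than fixing a covering set $X$ in advance.
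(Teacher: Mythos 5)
Your proof is correct, and its core runs along a genuinely different route from the paper's, even though both share the outer structure (upper bound from Proposition \ref{prop0}; forward direction reduced to the backward one via the finite partition of $W$ into degree classes, which is essentially the paper's own move in its $(\Rightarrow)$ direction) and both rely on distinguishing sets plus de Bruijn--Erd\H{o}s. The paper proves the lower bound by contradiction: assuming $\deg^+(u)=m<n$ it splits into two cases --- either $u$-many $w$ have a successor escaping $\bigcup_i D_{v_i}$, or $u$-many $w$ send two or more edges into a single $D_{v_i}$ --- and in each case it manufactures a new successor of $u$ by an explicit F.I.P.\ construction, in the second case after colouring the \emph{predecessor} set (joining $w,t$ that share a successor in $D_{v_i}$), extracting a monochromatic class $C_k\in u$, and splitting $R^+(C_k)\cap D_{v_i}$ into two pieces $Z_0\uplus Z_1$. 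You instead argue directly: you colour the \emph{successor} side (each $D_i$, joining points with a common $R$-predecessor in $B$), thin to independent classes $D_i'\in v_i$, and invoke the standard fact that a filter equals the intersection of the ultrafilters extending it, so that $D'=\bigsqcup_i D_i'\in\bigcap_i v_i=\mathcal{F}_u$ yields some $X\in u$ with $R^+(X)\subseteq D'$, after which a one-line count gives $\deg^+(w)\le N$ for $w\in X$. The filter-intersection fact (whose nontrivial inclusion is itself proved by exactly the kind of F.I.P.\ extension the paper performs by hand) absorbs both of the paper's cases at once: it forces all successors of $X$-points into the blocks, and independence caps each block's contribution at one. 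What you gain is modularity and a contradiction-free count; what the paper's formulation buys is that its predecessor-side colouring and explicit ultrafilter constructions are precisely the templates elaborated later in Example \ref{exm0} and Theorem \ref{thm2}. Two points you leave implicit are harmless but worth stating: for $k\ge 1$ the hypothesis $B\in u$ makes $\mathcal{F}_u$ a proper filter (so the intersection fact applies and $N\ge 1$), the case $k=0$ being trivial; and exactly one colour class of each $D_i$ lies in $v_i$ because the colouring is finite.
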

	\begin{proof}
		($\Leftarrow$) Suppose that $X = \{w\in W: \deg^+(w) = n\}\in u$. Then by Proposition \ref{prop0} we have that $\deg^+(u)\leq n$. Assume, by way of contradiction, that $\deg^+(u)= m$, for some $m < n$ and fix  distinguishing elements for $R^+(u) =\{v_0,\dots, v_m\}$.  Now, we have two cases. Either, \[A = \{w\in X: (\exists z_w\in R^+(w))(z_w\not \in \bigcup_{0\leq i\leq m} D_{v_i})\}\in u\]
		or
		\[B = \{w\in X: |R^+(w)\cap D_{v_i}| > 2\}\in u\]
		for some $0\leq i\leq m$.
		In the first case, it is easy to see that the system
		\[v^* = \{R^+(Y): Y \in u\}\cup \{\{z_w\in W: w\in A\}\}\]
		has F.I.P., and any ultrafilter $v$ extending $v^*$ has the property $Ruv$, and $v\neq v_i$, for all $0\leq i\leq m$. This contradicts  the assumption that $\deg^+(u) = m$.  For the second case, we define the graph $H = \< B, E\>$, where
		\[\{x,y\}\in E \text{ if and only if } (\exists z\in D_{v_i} )(Rxz \wedge Ry z).\]
		As $\mathfrak{F}$ is bounded, there is some $m\in \omega$, such that $\deg_E(w) \leq m$, for all $w\in H$.  Hence, for all finite $G\subseteq B$, the graph $\langle G, {E \upharpoonright G}\rangle$ is $m+1$-colorable. Consequently $H$ is $m+1$ colorable, and there is some $C_k\in u$, as in Proposition \ref{prop1}. Then for each $w,t\in C_k$ we must have $D_{v_i}\cap R^+(w)\cap R^+(t) = \emptyset$. Hence, consider any partition $Z_0\uplus Z_1=R^+(C_k) \cap D_{v_i}$, where each $Z_i$ picks  at least one element from $R^+(w)\cap D_{v_i}$, for each $w\in C_k$.

		\begin{center}
			\begin{tikzpicture}
				\filldraw[fill=white, draw=black,loosely dashed, thick,rounded corners] (0,0) rectangle (7,.5);
				
				\filldraw[fill=white, draw=black,loosely dashed, thick,rounded corners] (0,1.5) rectangle (7,2.5);
				\node[black] at (8,2) {$R^+(C_k)\cap D_{v_i}$};

				\node[black] at (7.5,.25) {$C_k$};

				\draw[fill=black] (3,.25) circle [radius=0.05];
				
				\draw[fill=black] (1,.25) circle [radius=0.05];

				\node[black] at (6.5,.25) {$\dots $};
				
				\node[black] at (6.5,2) {$\dots $};

				\draw[fill=black] (5,.25) circle [radius=0.05];

				\draw[black, thick, -latex] (1,.25) -- (1,2);
				\draw[black, thick, -latex] (1,.25) -- (.7,2);
				\draw[black, thick, -latex] (1,.25) -- (1.3,2);
				\draw[black, thick, -latex] (1,.25) -- (.4,2);

				\draw[black, thick, -latex] (3,.25) -- (2.4,2);
				\draw[black, thick, -latex] (3,.25) -- (3.3,2);

				\draw[black, thick, -latex] (5,.25) -- (5,2);
				\draw[black, thick, -latex] (5,.25) -- (4.7,2);
				\draw[black, thick, -latex] (5,.25) -- (5.3,2);
				\draw[black, thick, , -latex] (5,.25) -- (4.4,2);
				\draw[black, thick, -latex] (5,.25) -- (5.6,2);

				\draw[fill=black] (1,2) circle [radius=0.05];
				\node [above, black] at (1,2) {{\tiny $Z_1$}};
				
				\draw[fill=black] (.7,2) circle [radius=0.05];
				\node [above, black] at (.6,2) {{\tiny $Z_1$}};
				
				\draw[fill=black] (1.3,2) circle [radius=0.05];
				\node [above, black] at (1.4,2) {{\tiny $Z_1$}};
				
				\draw[fill=black] (.4,2) circle [radius=0.05];
				\node [above, black] at (.2,2) {{\tiny $Z_0$}};

				\node [above, black] at (2.2,2) {{\tiny $Z_0$}};

				\node [above, black] at (3.4,2) {{\tiny $Z_1$}};
				
				\node [above, black] at (4.2,2) {{\tiny $Z_0$}};
				
				\node [above, black] at (4.6,2) {{\tiny $Z_1$}};
				
				\node [above, black] at (5,2) {{\tiny $Z_1$}};
				
				\node [above, black] at (5.4,2) {{\tiny $Z_1$}};
				
				\node [above, black] at (5.8,2) {{\tiny $Z_1$}};

				\draw[fill=black] (3.3,2) circle [radius=0.05];
				\draw[fill=black] (2.4,2) circle [radius=0.05];
				
				\draw[fill=black] (5,2) circle [radius=0.05];
				\draw[fill=black] (4.7,2) circle [radius=0.05];
				\draw[fill=black] (5.3,2) circle [radius=0.05];
				\draw[fill=black] (4.4,2) circle [radius=0.05];
				\draw[fill=black] (5.6,2) circle [radius=0.05];
				
			\end{tikzpicture}
			
			Partition of $R^+(C_k)\cap D_{v_i}$
			
		\end{center}
		
		Consequently, for $i\in \{0,1\}$ the system  $z^*_i = \{ R[Y] : Y\in u\}\cup\{Z_i\}$ has F.I.P. Therefore each $z^*_i$ extends to some ultrafilter $z_i$ with $R^\ue uz_i$. As $Z_0\uplus Z_1$ is a partition, we may assume that $Z_0\not \in v_i$. Hence $z_0\neq v_i$ and we obtain $\deg^+(u) >m$, which is a contradiction.\\
		($\Rightarrow$) If $\deg^+(u) = n$, then by considering the distinguishing elements for $R^+(u) =\{v_0,\dots, v_n\}$ we obtain $\{w\in W: \deg^+(w) \geq n \}\in u$. If $\{w\in W: \deg^+(w) = n \}\not\in u$, then since $\F$ is bounded, there is some $m > n$, for which $\{w\in W: \deg^+(w) = m \} \in u$. Now we can just simply repeat the whole construction above.
	\end{proof}

	\begin{rem}
		In modal logic, the so called graded modal language ($\mathsf{GML}$) is an extension by $\diamondsuit_k$, for all $k\in \omega$ which has the interpretation,
		\[\M,w\Vdash \diamondsuit_k\varphi \Leftrightarrow |\{v\in W: Rwv \text{ and }\M,v\Vdash\varphi\}| \geq k.\]
		One could think that an equivalence like (\ref{truth=member})  would shrink the proofs above into one single line.  However,  there is no appropriate notion of an ultrafilter extension for $\mathsf{GML}$  in frame semantics  (cf. \cite{sano}).
		Moreover,  the property ``\textit{every element has branching $n$}'' is not expressible by an existential closure of any $r(u)$ formula and is not preserved under $p$-morphisms. Nonetheless, it  is preserved under ultrafilter extensions.  
	\end{rem}

	\begin{defn}
		Let $\F$ be a structure and $w\in W$. For a fixed $n\in \omega$ we define the sets:
		\begin{enumerate}
			\item[•] $\langle w\rangle^0 = \{w\}$,
			\item[•] $\langle w\rangle^{i+1} = R(\langle w\rangle^{i})\cup \langle w\rangle^i$ for $i\leq n$.
		\end{enumerate}
		Then the $n$-\textit{Hull} of $w$ is  the structure $\langle \langle w\rangle^n, R\!\upharpoonright {\langle w\rangle^n}\rangle$. Also,  we blurry the distinction between an $n$-Hull and its underlying set.   We let $\langle w\rangle^\omega =\bigcup_{n\in \omega} \langle w\rangle^n$, and call an element  $s$ an \textit{endpoint} of $ \langle w\rangle^n$, if $s \in \langle w\rangle^n \setminus\langle w\rangle^{n-1}$.
	\end{defn}
	
	\paragraph{Notation} Given  $\< w\>^n$, we let $\varphi_{\langle w\rangle^n}(x)$  be the $\mathcal{L}_R$-formula describing all the possible connections  between the elements of $\langle w\rangle^n$ with respect to $w$. Such a formula always exists, as $\F$ is bounded.  By $\langle w\rangle^n\cong \langle v \rangle^n$, we abbreviate the fact that there is an isomorphism  $f:\langle w\rangle^n \to \langle v\rangle^n$, such that $f(w) = v$. Cleary,   $\langle w\rangle^n\cong \langle v \rangle^n$ if and only if $\F\models \varphi_{\langle w\rangle^n}(v)$. \\
	
	Our next goal is to prove the  following \L o\'s Lemma-like characterization within the class of bounded structures: For every bounded structure $\F$, $n\in \omega$ and $u\in \Uf(W)$ we will show sthat  $\{w\in W: \langle w\rangle^n \cong \langle u\rangle^n\}\in u $.  For this purpose, we are going to use the following example to outline the main ideas.
	
	\begin{exmp}\label{exm0}
		Let $\mathfrak{F}=\langle W,R\rangle$ be a structure $u\in \Uf^*(W)$, and consider its 1-Hull: $\langle u\rangle^1 =\<\{u, v_0,v_1,v_2,v_3\}, R^\ue\upharpoonright\<u\>^1 \> $, where 
		\[R^\ue\upharpoonright\<u\>^1 =\{\<u, v_i\>: i\in \{0,1\}\}\cup \{\<v_i, u\>: i\in \{2,3,4\}\}\cup \{\<v_0,v_2\>\}\]
		that is:
	\end{exmp}
	
	\begin{center}
		\begin{tikzpicture}
			
			decoration = {snake,   
				pre length=3pt,post length=7pt,
			},

			\draw[fill=black] (0, 0) circle [radius=0.05];
			\node [black, right] at (0,0) {$u$};

			\draw[fill=black] (-.5, 1) circle [radius=0.05];
			\node [black, left] at (-.5, 1) {$v_0$};

			\draw[fill=black] (.5, 1) circle [radius=0.05];
			\node [black, right] at (.5, 1) {$v_1$};

			\draw[fill=black] (0, -1) circle [radius=0.05];
			\node [black, below] at (0, -1) {$v_3$};

			\draw[fill=black] (-.5, -1) circle [radius=0.05];
			\node [black, below] at (-.5, -1) {$v_2$};
			
			\draw[fill=black] (.5, -1) circle [radius=0.05];
			\node [black, below] at (.5, -1) {$v_4$};

			\draw[black,  -latex, thick] (0,0) to [] (-.5,1);
			
			\draw[black,  -latex, thick] (0,0) to [] (.5,1);

			\draw[black,  -latex, thick] (0, -1) to [] (0,0);
			
			\draw[black,  -latex, thick] (-.5, -1) to [bend left] (0,0);
			
			\draw[black,  -latex, thick] (.5, -1) to [bend right] (0,0);

			\draw[black,  -latex, thick] (-.5, 1) to [bend right] (-.5, -1);

		\end{tikzpicture}
	\end{center}
	
	\noindent Our aim is to show that $\{w\in W:  \langle u \rangle^1 \cong \langle w\rangle^1\}\in u$.\\
	\begin{proof}
		
		\noindent\textsc{Step 1:} Let $D_u,D_{v_0},\dots, D_{v_4}$ be  pairwise disjoint distinguishing elements for $u,v_0,\dots, v_4$ and consider $U_0 = D_u\cap \{w\in W : \deg^+(w) = \deg^+(u), \deg^-(w) = \deg^-(u)\}$.  By Proposition \ref{prop2} we have $U_0\in u$.  Define the set $U_1$ as:
		\begin{align}
			R^-(D_{v_0})\cap R^-(D_{v_1})\cap R^+(D_{v_2})\cap R^+(D_{v_3})\cap R^+(D_{v_4})\cap U_0.
		\end{align}
		Then $U_1\in u$.   Now,  for each $w\in U_1$  there is exactly one  $s_{w,0}\in R^+(w)$ for which $s_{w,0}\in D_{v_0}$, and exactly one $s_{w,1}\in R^+(w)$ for which $s_{w,1}\in D_{v_1}$. Moreover, $s_{w,0}\neq s_{w,1}$. This  is due to $D_{v_1}\cap D_{v_0} =\emptyset$  and  the construction of $U_1$. The same is true for all the members in $R^-(w)$. Hence, for each $w\in U_1$, we let $f_w: \langle u\rangle^1\to \langle w\rangle^1$ to be the function defined as:
		\begin{align*}
			f_w(u) &= w\\
			f_w(v_i) &= s_{w,i}
		\end{align*}
		We will show that $\{w\in W: f_w: \langle u \rangle^1 \cong \langle w\rangle^1\}\in u$.\\
		
		\noindent \textsc{Step 2:}  First, we claim that
		\begin{align}
			U_2 = \{ w\in U_1: Rwf_w(v_0) \wedge Rf_w(v_0) f_w(v_2) \wedge Rf_w(v_2) w\}\in u.
		\end{align}

		\noindent Suppose otherwise. Then we have:
		\begin{align}
			U^* = \{ w\in U_1:\big(Rf_w(v_2) w\wedge Rw f_w(v_1) \big)\to \neg Rf_w(v_0) f_w(v_2) \}\in u
		\end{align}
		
		\noindent	Next, we define the following sets:
		\begin{align*}
			A &= R^+(U^*)\cap D_{v_0} \in v_0\\
			B &= R^+(A)\cap D_{v_2} \in v_2\\
			C &= R^+ (B) \cap U^*\in u
		\end{align*}
		
		\noindent Pick any $w\in C$.  As $w\in R^+(B)$  and $w\in U^*$, we have that $f_w(v_2)\in D_{v_2}\cap R^+(A)$ with $Rf_w(v_2)w$, and this is the only such element.  Since $f_w(v_2)\in R^+(A)$, we have that $Rtf_w(v_2)$ for some $t\in  R^+(U^*)\cap D_{v_0}$. Observe  that $t\neq f_w(v_0)$, otherwise   $Rwf_w(v_0)$  would contradict to $w\in U^*$. Also $t\in R^+(U^*)$ implies an existence of some $w'\in U^*$, such that $Rw't$. Since $t\in D_{v_0}$, we must have   $w\neq w'$.  Hence, let us define 
		\begin{align}
			\nabla(x) = \{ y\in C:  (\exists s \in B)(\exists t\in A)(Rsx\wedge Rts\wedge Ryt)\}
		\end{align}
		\noindent  and define the graph $\langle C, E \rangle$ such that:
		\begin{align}
			\{x,y\}\in E \iff y\in \nabla(x) \text{ or } x\in \nabla(y)
		\end{align}
		
		\begin{enumerate}
			
			\item[•] As the original structure $\mathfrak{F}$ is bounded, there is some $m\in \omega$, such that for all finite subgraphs $H\leq C$, we have that $\deg_E(H) \leq m$. Hence  $\langle H, E\rangle$ is $m+1$ colorable, and so is $\langle C, E\rangle$ by the de Bruijn-Erdős Theorem via some coloring $c$.\footnote{In general, the exact value of  $m\in\omega$ depends on $\nabla(X)$, and on the original structure $\F$ itself. Nevertheless, there is some $m\in\omega$, such that for each $H\leq C$, $\deg_E(H)\leq m$, since $\F$ is bounded. This  also applies to the corresponding part of Theorem \ref{thm2}.}
			\item[•] This coloring gives an $m+1$ partition of $\bigcup_{\ell \leq m} M_\ell =C\in u$. Let's say $M_k\in u$.
			Then consider the sets;
		\end{enumerate}
		\begin{align*}
			A' &= R^+(M_k)\cap D_{v_0} \in v_0\\
			B' &= R^+(A')\cap D_{v_2} \in v_2\\
			C' &= R^+ (B') \in u
		\end{align*}
		\begin{enumerate}
			\item[] Since $A'\subseteq A$ and $B'\subseteq B$, for all $s\in C'$ there is some $t\in M_k$ such that $sEt$. Hence $c(s)\neq c(t) = k$. However, $M_k\cap C' =\emptyset$,  as every member of $C'$ has different color from $k$, which is a  contradiction.
		\end{enumerate}
		
		\noindent\textsc{Step 3:} Finally, we need to assure the existence of  $V\subseteq U_2$, such that $V\in u$,  and no ``extra'' relations between the element are added.  We can single out these extra relations one by one.  As an 
		example, assume, for the sake of contradiction, that
		\begin{align}
			V^* = \{ w\in U_2: Rwf_w(v_0)\wedge Rwf_w(v_1)\wedge Rf_w(v_0)f_w(v_1) \}\in u.
		\end{align}

		\noindent That is, for all $w\in V^*$, the map $f_w: \<u \>^1\to \< w\>^1$ is not an isomorphism, as  $f_w(v_i) = s_i$ and $Rf_w(v_0)f_w(v_1)$, but not $R^\ue v_0v_1$. Since not $R^\ue v_0v_1$, there is some $X\in v_0$ such that $R^+(X)\not\in v_1$, hence
		\begin{align*}
			A &= X \cap D_{v_0}\in v_0\\
			B &=   \big( W\setminus {R^+(X)} \big)\cap D_{v_1} \in v_1
		\end{align*}
		Then it is easy to see why $\emptyset = R^-(A)\cap R^-(B)\cap V^* \in u$. Otherwise let $w$ be an element in the intersection. Since $w\in R^-(A)$, there is some $s\in  X$, such that $Rws$. As $s\in D_{v_0}$, by the assumption on $w\in V^*$, we have that $s=f_w(v_0)$. Similarly, we get $f_w(v_1)\in B$. However,   $w\in V^*$ implies $Rf_w(v_0)f_w(v_1)$, whereas $f_w(v_0)\in X$ and $f_w(v_i)\in \big( W\setminus {R^+(X)} \big)$ implies  $\neg Rf_w(v_0)f_w(v_1)$. This gives a contradiction.  By iterating this process  we obtain $\{w\in U_2: f_w: \langle w \rangle^1 \cong \langle u\rangle^1\}\in u$. But then $\{w\in U_2: f_w: \langle w \rangle^1 \cong \langle u\rangle^1\} \subseteq\{w\in W:\langle w \rangle^1 \cong \langle u\rangle^1\}\in u$. 
	\end{proof}

	\paragraph{Notation.} Before turning to Theorem \ref{thm2} let us introduce some useful concepts and notation. For a fixed structure $\mathfrak{F}=\langle W,R\rangle$ and $s,t\in W$, we say that there is a \textit{road} from $s$ to $t$ if there are pairwise different elements $w_0,\dots,w_n$ such that $s=w_0$, $t=w_n$ and $w_0R_{1}^\circ\dots R_{n}^\circ w_n$, where each $R_i^\circ\in \{R,R^{-1}\}$.  Then $R[s,t]$  abbreviates the existence of a concrete road from $s$ to $t$. To avoid cumbersome notation, if we already picked a road $R[s,t]$, by $R[t,s]$ we mean its ``inverse'', i.e. the road with $(R_i^\circ)^{-1}$ in each step. When we say that there are $s=w_0,\dots, w_n =t \in W$ such that $R[s,t]$, we mean that the particular road $R[s,t]$ was constructed from  these elements.  Finally, if there are roads $R[w,t]$ and $R[t,s]$, then their ``composition'' from $w$ to $s$ along $R[w,t]$ and $R[t,s]$ is denoted by $ R[w,t,s]$.

	\begin{defn}
		Let $u,v\in \Uf(W)$ and suppose that there are $v_0,\dots, v_n $ with $u=v_0$ and $v=v_n$ such that  $u(R^{\ue})_1^\circ\dots (R^{\ue})^\circ_{n}v $ is a road.  The \textit{ultrafilter road of $R^\ue[u,v]$ based on $X\in u$} is the set $\Delta(X, R^\ue [u,v])$ defined  as:
		
		\begin{align*}
			\Delta_0\big(X, R^\ue[u,v]\big) &= X\\
			\Delta_{i+1}\big(X,R^\ue[u,v] \big) &= \begin{cases}
				R^+\Big(\Delta_i\big(X, R^\ue[u,v] \big) \Big)\cap D_{v_{i+1}} & \text{if } (R^\ue)^\circ_{i+1} = R^\ue\\
				R^-\Big(\Delta_i\big(X, R^\ue[u,v] \big) \Big)\cap D_{v_{i+1}}   & \text{if } (R^\ue)^\circ_{i+1} = (R^\ue)^{-1}
				\vspace{.2cm}
			\end{cases}
		\end{align*}
		
		\noindent	Finally let $\Delta\big(X,R^\ue [u,v]\big) := \Delta_{n}\big(X,R^\ue [u,v]\big)$.\\
	\end{defn}
	\begin{defn} Let  ${R} [w_0,w_n]$ be a particular road along some $w_0,\dots,w_n \in W$ and $R^\ue[v_0,v_m]$ be another road along some $v_0,\dots, v_m\in \Uf(W)$. Then ${R} [w_0,w_n]$  is \textit{ultrafilter similar to} ${R^\ue} [v_0,v_m]$ if $n=m$,  moreover for each $0\leq i\leq n$ the following holds:
		\begin{enumerate}
			\item[•] if  $R^\circ_i = R$, then $(R^\ue)_i^\circ = R^\ue$,
			\item[•] if  $R_i^\circ = R^{-1}$, then $(R^\ue)^\circ_i = (R^\ue)^{-1}$,
			\item[•]$w_i\in D_{v_i}$.
		\end{enumerate}
		
	\end{defn}
	\noindent	Now we are ready to prove our key theorem.
	\begin{thm}\label{thm2}
		Let $\mathfrak{F}$ be a bounded graph. Then for all $n\in\omega$ and $u\in \Uf^*(W)$, we have: 
		\[\{w\in W: \langle w\rangle^n \cong \langle u\rangle^n\}\in u. \] 
	\end{thm}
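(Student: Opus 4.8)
The plan is to lift the three-step argument of Example \ref{exm0} from the $1$-Hull to an arbitrary $n$-Hull, producing on a $u$-large set an explicit isomorphism $f_w\colon\langle u\rangle^n\to\langle w\rangle^n$ with $f_w(u)=w$. Since $\F$ is bounded, so is $\uf$ (degrees are preserved by Proposition \ref{prop2} and its $\deg^-$-dual), hence $\langle u\rangle^n$ is a finite structure; enumerate its elements as $u=u_0,u_1,\dots,u_N$. For each $u_j$ I would fix one road $R^\ue[u,u_j]$ inside $\langle u\rangle^n$, together with pairwise disjoint distinguishing elements $D_{u_0},\dots,D_{u_N}$ refined so that each $D_{u_j}$ is contained in $\{s\in W:\deg^+(s)=\deg^+(u_j),\ \deg^-(s)=\deg^-(u_j)\}$ and, using Proposition \ref{prop1}, respects reflexivity; each such set lies in $u_j$ by Propositions \ref{prop2} and \ref{prop1}. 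The value $f_w(u_j)$ is then declared to be the unique endpoint in $D_{u_j}$ of a road traced from $w$ that is ultrafilter similar to $R^\ue[u,u_j]$, i.e. the witness extracted from $\Delta\big(D_u,R^\ue[u,u_j]\big)$.

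Step 1 (well-definedness). First I would isolate $U_1\in u$ on which every $f_w$ is a genuine injective map. Tracing a road one step at a time, from an element $s\in D_{u_i}$ the degree constraint forces $s$ to have exactly one neighbour in the next set $D_{u_{i+1}}$ in the prescribed direction: at least one because $s$ lies in the appropriate $R^{\pm}$-preimage built into $U_1$, and at most one because $\deg^{\pm}(s)=\deg^{\pm}(u_i)$ together with the disjointness of the $D_{u_\ell}$ leaves no room for a second. Intersecting the finitely many $u$-large sets coming from the $N+1$ roads yields $U_1$; injectivity of $f_w$ is immediate from $f_w(u_j)\in D_{u_j}$ and pairwise disjointness.

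Step 2 (edges preserved) and Step 3 (no spurious edges). The edges lying on the chosen roads are preserved by construction, so it remains to treat the finitely many remaining pairs $(u_i,u_j)$. For a ``chord'', i.e. a pair with $R^\ue u_iu_j$ not already recorded by a road, I would show $\{w:Rf_w(u_i)f_w(u_j)\}\in u$ by repeating the coloring argument of Step 2 of Example \ref{exm0}: assuming failure on a $u$-large set, one forms the conflict graph (the analogue of the $\nabla$-graph) on that set, observes that boundedness of $\F$ makes its $E$-degree finite, applies the de Bruijn-Erdős Theorem to obtain a monochromatic class in $u$, and derives a contradiction. Dually, for a non-edge (a pair with $\neg R^\ue u_iu_j$) I would show $\{w:\neg Rf_w(u_i)f_w(u_j)\}\in u$ by the elimination-of-extra-relations argument of Step 3 of Example \ref{exm0}. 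Intersecting $U_1$ with these finitely many $u$-large sets gives a set on which $f_w$ preserves and reflects $R$; since the degree of each $f_w(u_j)$ matches that of $u_j$, the image is exactly $\langle w\rangle^n$, so $f_w$ is the desired isomorphism.

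The main obstacle is Step 2: showing that an edge of the hull joining two vertices reached from $u$ by distinct roads is genuinely inherited by their $f_w$-images. This is exactly the point where the boundedness hypothesis, through the de Bruijn-Erdős coloring, is indispensable -- a plain finite-intersection-property argument would only produce \emph{some} neighbour in the target distinguishing set, not the \emph{same} one demanded by the chord. Secondary care is needed in the bookkeeping of tracing, inverting and composing roads of length up to $n$ so that well-definedness survives each step; but these are routine once the one-step uniqueness of Step 1 and the coloring scheme of Step 2 are in place.
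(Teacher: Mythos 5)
Your proposal is correct and takes essentially the same route as the paper's proof: the same ingredients (distinguishing elements refined via Propositions \ref{prop1} and \ref{prop2}, roads traced through ultrafilter-road pullback sets $\Delta$ with ultrafilter similarity, the de Bruijn--Erd\H{o}s coloring argument for chord edges, and the elimination argument for spurious edges) all appear there, only organized as an induction on the hull radius $m$ with endpoint-by-endpoint extension rather than your single pass over a fixed enumeration of $\langle u\rangle^n$. One small repair: the ``at least one neighbour'' half of your one-step tracing must be encoded in the refined distinguishing sets and the pullback sets $\Delta\big(D_{u_j},R^\ue[u_j,u]\big)$ (as in the paper's set $U'_m$), not in $U_1$, since $U_1$ constrains only $w$ itself and not the intermediate elements along the road.
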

	\begin{proof}
		\textsc{Step 1:} Pick any $u\in \Uf^*(W)$ and $n\in \omega$.  Let $u,v_0,\dots, v_k \in \langle u\rangle^{n}$ be an enumeration with distinguishing elements $D_u,D_{v_0},\dots,D_{v_k}$.  By Proposition \ref{prop1}, we may assume that for each $i\leq k$ we have that $D_{v_i} \subseteq \{w\in W: \F\models Rww\}$,  whenever $\uf\models Rv_iv_i$ and $D_{v_i} \subseteq \{w\in W: \F\not\models Rww\}$  whenever $\uf\not\models Rv_iv_i$.   
		Suppose for some $m<n$ we already know that there is some $U_m \subseteq D_u$, such that $U_m\in u$ with the following property: For all $w\in U_m$
		
		\begin{enumerate}
			\item[(i)] there is an isomorphism  $f_{w,m}: \langle u\rangle^m\to\langle w\rangle^m$,  such that $f_{w,m}(u)= w$,
			\item[(ii)] $f_{w,m}(v_i)\in D_{v_i}$.
		\end{enumerate}
		
		\noindent 	 As for the base case  $m=0$, we have that $U_0\in u$, again by Proposition \ref{prop1}. In what follows, up to \textsc{Step 4} we present a technique to obtain:  
		\begin{align}
			U_{m+1} = \left\{ w\in U_m\ :  \parbox[c]{3in}{\centering
				$ \exists f_{w,m+1}: \langle w\rangle^{m+1}\cong \langle u\rangle^{m+1}, f_{w,m}\subseteq f_{w, m+1},$\\
				$(\forall v\in \<u\>^{m+1})f_{w,m+1}(v)\in D_v $
			}\right\} \in u.
		\end{align} 
		Let  $e_0,\dots, e_r$ be an enumeration of all the endpoints of $\langle u\rangle^m$, and consider $e_0$.
		\begin{center}
			\begin{tikzpicture}
				
				\filldraw[fill=none, draw=black, thick, rounded corners, dashed] (0,2) rectangle (2,4);
				
				\node [left, black] at (3,4) {$\langle u\rangle^m$};      
				
				\draw[fill=black] (1,3) circle [radius=0.05];
				\node [left, black] at (1,3) {$u$};      
				
				\draw[fill=black] (.5,2.5) circle [radius=0.05];
				\draw[fill=black] (1,2.5) circle [radius=0.05];
				\draw[fill=black] (1.5,2.5) circle [radius=0.05];
				\draw[fill=black] (1,3.5) circle [radius=0.05];
				\draw[fill=black] (1.5,3) circle [radius=0.05];

				\draw[black,  -latex, thick,line join=round,
				decorate, decoration={
					zigzag,
					segment length=4,
					amplitude=.9,post=lineto,
					post length=2pt
				}] (.5,2.5) to [] (1,3);
				
				\draw[black,  -latex, thick,line join=round,
				decorate, decoration={
					zigzag,
					segment length=4,
					amplitude=.9,post=lineto,
					post length=2pt
				}] (1,2.5) to [] (1,3);
				
				\draw[black,  -latex, thick,line join=round,
				decorate, decoration={
					zigzag,
					segment length=4,
					amplitude=.9,post=lineto,
					post length=2pt
				}] (1.5,2.5) to [] (1,3);
				
				\draw[black,  -latex, thick,line join=round,
				decorate, decoration={
					zigzag,
					segment length=4,
					amplitude=.9,post=lineto,
					post length=2pt
				}] (1,3) to [] (1,3.5);
				
				\draw[black,  -latex, thick,line join=round,
				decorate, decoration={
					zigzag,
					segment length=4,
					amplitude=.9,post=lineto,
					post length=2pt
				}] (1.5,3) to [] (1,3.5);

				\filldraw[fill=none, draw=gray, thick, rounded corners, dashed] (-1,1.4) rectangle (3,4.7);
				
				\node [left, black] at (4.3,5) {$\langle u \rangle^{n}$};      
				
				\draw[fill=gray,] (-.5,1.6) circle [radius=0.05];
				\draw[fill=gray] (-.5,2.5) circle [radius=0.05];
				\draw[fill=gray] (2.5,3.5) circle [radius=0.05];
				\draw[fill=gray] (2.5,2.5) circle [radius=0.05];
				\draw[fill=gray] (1,4.5) circle [radius=0.05];

				\draw[gray,  -latex, thick,line join=round,
				decorate, decoration={
					zigzag,
					segment length=4,
					amplitude=.9,post=lineto,
					post length=2pt
				},] (-.5,1.6) to [] (.5,2.5);
				\draw[gray,  -latex, thick,line join=round,
				decorate, decoration={
					zigzag,
					segment length=4,
					amplitude=.9,post=lineto,
					post length=2pt
				}] (-.5,2.5) to [] (.5,2.5);
				
				\draw[gray,  -latex, thick,line join=round,
				decorate, decoration={
					zigzag,
					segment length=4,
					amplitude=.9,post=lineto,
					post length=2pt
				}] (1,3.5) to [] (1,4.5);
				\draw[gray,  -latex, thick,line join=round,
				decorate, decoration={
					zigzag,
					segment length=4,
					amplitude=.9,post=lineto,
					post length=2pt
				}] (2.5,3.5) to [] (1.5, 3);
				\draw[gray,  -latex, thick,line join=round,
				decorate, decoration={
					zigzag,
					segment length=4,
					amplitude=.9,post=lineto,
					post length=2pt
				}] (2.5,2.5) to [] (1.5,3);

				\filldraw[fill=white, draw=black,, thick,rounded corners] (-4,1) rectangle (6,-1);
				
				\draw[gray,  -latex, thick] (1,3) to [bend right] (1,0);
				\draw[gray,  -latex, thick] (.5,2.5) to [bend right] (.5,-.5);
				\draw[gray,  -latex, thick] (1,3.5) to [bend left] (1,.5);
				\draw[gray,  -latex, thick] (1,3) to [bend right] (1,0);
				\draw[gray,  -latex, thick] (1.5,2.5) to [bend left] (1.5,-.5);
				\draw[gray,  -latex, thick] (1.5,3) to [bend left] (1.5,0);
				\draw[gray,  -latex, thick] (1,2.5) to [bend right] (1,-.5);

				\draw[fill=black] (1,0) circle [radius=0.05];
				\node [left, black] at (1,0) {$w$};     
				
				\draw[fill=black] (.5,-.5) circle [radius=0.05];
				\draw[fill=black] (1,-.5) circle [radius=0.05];
				\draw[fill=black] (1.5,-.5) circle [radius=0.05];
				\draw[fill=black] (1,.5) circle [radius=0.05];
				\draw[fill=black] (1.5,0) circle [radius=0.05];
				
				\draw[black,  -latex, thick,line join=round,
				decorate, decoration={
					zigzag,
					segment length=4,
					amplitude=.9,post=lineto,
					post length=2pt
				}] (1,0) to [] (1,.5);
				
				\draw[black,  -latex, thick,line join=round,
				decorate, decoration={
					zigzag,
					segment length=4,
					amplitude=.9,post=lineto,
					post length=2pt
				}] (1,-.5) to [] (1,0);
				
				\draw[black,  -latex, thick,line join=round,
				decorate, decoration={
					zigzag,
					segment length=4,
					amplitude=.9,post=lineto,
					post length=2pt
				}] (1.5,-.5) to [] (1,0);
				
				\draw[black,  -latex, thick,line join=round,
				decorate, decoration={
					zigzag,
					segment length=4,
					amplitude=.9,post=lineto,
					post length=2pt
				}] (1.5,0) to [] (1,.5);
				
				\draw[black,  -latex, thick,line join=round,
				decorate, decoration={
					zigzag,
					segment length=4,
					amplitude=.9,post=lineto,
					post length=2pt
				}] (.5,-.5) to [] (1,0);

				\draw[fill=black] (-3,0) circle [radius=0.05];
				\node [left, black] at (-3,0) {$w'$};     
				
				\draw[fill=black] (-3.5,-.5) circle [radius=0.05];
				\draw[fill=black] (-3,-.5) circle [radius=0.05];
				\draw[fill=black] (-3,.5) circle [radius=0.05];
				\draw[fill=black] (-2.5,0) circle [radius=0.05];
				\draw[fill=black] (-2.5,-.5) circle [radius=0.05];
				
				\draw[black,  -latex, thick,line join=round,
				decorate, decoration={
					zigzag,
					segment length=4,
					amplitude=.9,post=lineto,
					post length=2pt
				}] (-2.5,-.5) to [] (-3,0);
				
				\draw[black,  -latex, thick,line join=round,
				decorate, decoration={
					zigzag,
					segment length=4,
					amplitude=.9,post=lineto,
					post length=2pt
				}] (-2.5,0) to [] (-3,.5);
				
				\draw[black,  -latex, thick,line join=round,
				decorate, decoration={
					zigzag,
					segment length=4,
					amplitude=.9,post=lineto,
					post length=2pt
				}] (-3.5,-.5) to [] (-3,0);
				
				\draw[black,  -latex, thick,line join=round,
				decorate, decoration={
					zigzag,
					segment length=4,
					amplitude=.9,post=lineto,
					post length=2pt
				}] (-3,-.5) to [] (-3,0);
				
				\draw[black,  -latex, thick,line join=round,
				decorate, decoration={
					zigzag,
					segment length=4,
					amplitude=.9,post=lineto,
					post length=2pt
				}] (-3,0 ) to [] (-3,.5);

				\draw[fill=black] (5,0) circle [radius=0.05];
				\node [left, black] at (5,0) {$w''$};     
				
				\draw[fill=black] (5,-.5) circle [radius=0.05];
				\draw[fill=black] (4.5,-.5) circle [radius=0.05];
				\draw[fill=black] (5.5,-.5) circle [radius=0.05];
				\draw[fill=black] (5,.5) circle [radius=0.05];
				\draw[fill=black] (5.5,0) circle [radius=0.05];
				
				\draw[black,  -latex, thick,line join=round,
				decorate, decoration={
					zigzag,
					segment length=4,
					amplitude=.9,post=lineto,
					post length=2pt
				}] (4.5,-.5) to [] (5,0);
				
				\draw[black,  -latex, thick,line join=round,
				decorate, decoration={
					zigzag,
					segment length=4,
					amplitude=.9,post=lineto,
					post length=2pt
				}] (5,-.5) to [] (5,0);
				
				\draw[black,  -latex, thick,line join=round,
				decorate, decoration={
					zigzag,
					segment length=4,
					amplitude=.9,post=lineto,
					post length=2pt
				}] (5.5,-.5) to [] (5,0);
				
				\draw[black,  -latex, thick,line join=round,
				decorate, decoration={
					zigzag,
					segment length=4,
					amplitude=.9,post=lineto,
					post length=2pt
				}] (5,0) to [] (5,.5);
				
				\draw[black,  -latex, thick,line join=round,
				decorate, decoration={
					zigzag,
					segment length=4,
					amplitude=.9,post=lineto,
					post length=2pt
				}] (5.5,0 ) to [] (5,.5);
				
			\end{tikzpicture}
			
			Illustration of the isomorphism $f_{w,m}: \langle u\rangle^m\to \langle w\rangle^m$
		\end{center}
		We can assume  that $U_m$ has the following additional property: For all $w\in U_m$, $\deg^+(f_{w,m}(e_0)) = \deg^+(e_0)$ and $\deg^-(f_{w,m}(e_0)) = \deg^-(e_0)$. Otherwise let's say that $X =\{w\in U_m: \deg^+(f_{w,m}(e_0)) =k\neq \deg^+(e_0) \}\in u$. Since there is some road $R^\ue[u,e_0]$ in $\<u\>^m$, consider its ultrafilter road $\Delta\big(X, R^\ue[u,e_0] \big)\in e_0$. Observe that if $t\in \Delta \big(X, R^\ue[u,e_0]\big)$, then $t=f_{w,m}(e_0)$ for some $w\in X$, by the assumption on $U_m$.
		Therefore $\emptyset = \Delta\big(X, R^\ue[u,e_0] \big) \cap \{w\in W: \deg^+(w) =\deg^+(e_0)\}\in e_0$ by Proposition \ref{prop2}.  Let $z_0,\dots, z_h$ be an enumeration of $ R^\ue(e_0)\setminus \langle u\rangle^m$, then 
		
		\[U'_m =  \bigcap_{\ell \leq h} \Delta\big(D_{z_\ell} , R^\ue[z_\ell, e_0, u] \big) \cap U_m\in u.\]

		Hence, for  each $w\in U'_m$ let $s_{w,z_\ell}$  denote the unique element in $ R(f_{w,m}(e_0))\setminus \langle w\rangle^m$, for which $s_{w,z_\ell}\in D_{z_\ell}$, moreover $s_{w,z_\ell}$ and $ f_{w,m}(e_0)$ are related by $R$ exactly as $e_0$ and $z_\ell$  are related by $R^\ue$. This fact is abbreviated by  $R^\circ f_{w,m}(e_0) s_{w,z_\ell}$ and  $(R^\ue)^\circ e_0 z_\ell$.\\
		
		\noindent\textsc{Step 2:} Consider $z_0\in R^\ue(e_0)\setminus \langle u\rangle^m$, the first item from the enumeration. Suppose that $(R^\ue)^\circ z_0 v_i$ for some $v_i\in \<u\>^m$.  Then there are some $v_0=u,\dots, v_p = z_0$, such that there is  a road ${R^\ue}[u, e_0, z_0]$ in $\<u\>^m$. Also, there are some $v'_0=v_i,\dots, v'_q = u$, such that there is a
		road ${R^\ue}[v_i, u]$  in $\langle u \rangle^m$. We claim that,
		\begin{align}
			U_{z_0}^{v_i} = \left\{ w\in U'_m\ :  \parbox[c]{2.5in}{\centering
				$\Big( {R}[w, f_{w,m}(e_0), s_{w,z_0} ] \wedge $ \\
				$	\wedge s_{w,z_0}R^\circ f_{w,m}(v_i) \wedge R[f_{w,m}(v_i), w]\Big) $
			}\right\} \in u,
		\end{align}

		\noindent	where  ${R}[w, f_{w,m}(e_0), s_{w,z_0} ] $  is ultrafilter similar to $R^\ue[u,e,v_0]$ and  $R[f_{w,m}(v_i), w]$ is ultrafilter similar to  ${R^\ue}[v_i, u]$.    Otherwise, we have, 
		
		\begin{align}
			U^* = \left\{ w\in U'_m\ :  \parbox[c]{3in}{\centering
				$\Big( R[w, f_{w,m}(v_i)]
				\wedge R[w, f_{w,m}(e_0), s_{w,z_0}] \wedge$ \\	 $\wedge  \neg f_{w,m}(v_i)	R^\circ	s\Big) $
			}\right\} \in u.
		\end{align}
		
		\noindent We define the ultrafilter roads:
		\begin{align*}
			A_\ell &=\Delta_\ell \big(U^*, R^\ue[u,e_0,z_0]  \big)\cap D_{v_\ell} &\text{ for $1 \leq\ell \leq p$}\\
			B &= R^\circ (A_p)\cap D_{v_i}\\
			C_j &= \Delta_j\big(B, R^\ue[v_i,u] \big)\cap U^*& \text{ for $1\leq j\leq q$} 
		\end{align*}

		Let us abbreviate $C = C_q$ and pick $w\in C$.  By the assumption on $U'_m$, and hence on $U^*$, there is exactly one road $R[w, f_{w,m}(v_i)]$ ultrafilter similar to $R^\ue[u,v_i]$.  Also $f_{w,m}(v_i)\in B$. Then $R^\circ f_{w,m}(v_i)s$, for some $s\in \Delta\big(U^*, R^\ue[u,e_0,z_0]\big) $. Then $s\neq s_{w,z_0}$ by assumption and there is a $w'\in U^*$ with a road $R[w',f_{w',m}(e_0),s]$ being ultrafilter similar to $R^\ue[u,e_0,v_0].$ Since $w'\in U^*$, we get $w\neq w'$.  Using the same argument of \textsc{Step 2} from Example \ref{exm0}, let us define;
		\\

		\begin{align}
			\nabla(x) = \left\{ y\in C:  \parbox[c]{3.5in}{\centering $	\Big( \bigwedge_{1\leq \ell \leq p} \exists s_\ell\in \Delta_\ell(U^*, R^\ue [u,e_0,z_0])      \Big)$\\
				$\Big( \bigwedge_{1\leq j\leq q} \exists t_j\in \Delta_j(B, R^\ue [v_i,u])      \Big)	$\\
				$ \Big( {R}[x, s_{p-1}, s_{p} ] \wedge  t_{1}R^\circ s_p  \wedge R[y, t_1]\Big) $
			}
			\right\}
		\end{align}

		\noindent where  ${R}[x,s_{p-1},s_p]$ is composed by the elements $s_\ell$'s ($1\leq \ell \leq p$) and ultrafilter similar to ${R^\ue}[u,e_0,v_0] $ and ${R}[y, t_0]$ composed by the $t_j$'s ($1\leq j\leq q$) and ultrafilter similar to $R^\ue[u, v_i]$.  Define the graph $\langle C, E \rangle$ such that:
		\begin{align}
			\{x,y\}\in E \iff y\in \nabla(x) \text{ or } x\in \nabla(y).
		\end{align}
		
		As $\mathfrak{F}$ is bounded, there is some $m\in \omega$ such that for all finite subgraphs $H$ of $C$ we have that $\deg_E(H) \leq m$. Hence $\langle H, E\upharpoonright H \rangle$ is $m+1$ colorable  and so is $\langle C, E\rangle$ by the de Bruijn-Erdős Theorem.	The coloring gives an $m+1$ partition of $\bigcup_{\ell \leq m} M_\ell =C\in u$. Let's say $M_k\in u$.
		Now consider the sets:
		\begin{align*}
			A'_\ell &=\Delta_\ell \big(M_k, R^\ue[u,e_0,z_0]  \big)\cap D_{v_\ell} &\text{ for $1\leq \ell \leq p$}\\
			B' &= R^\circ (A_p')\cap D_{v_i}\\
			C'_j &= \Delta_j\big(B', R^\ue[v_i,u] \big)\cap U^*& \text{ for $1\leq j\leq q$} 
		\end{align*}
		As above, we use the abbreviation $C'=C'_q$. Since each $A'_\ell\subseteq A_\ell$ and $C'_j\subseteq C_j$, moreover $B'\subseteq B$,  for all $s\in C'$ there is some $t\in M_k$, such that $sEt$. Hence $c(s)\neq c(t) = k$. However, $M_k\cap C' =\emptyset$,  as every member of $C'$ has different color from $k$. This is a  contradiction, since $M_k\cap C'\in u$ by construction. \\

		\noindent\textsc{Step 3:} If $v_{\ell_1}, \dots, v_{\ell_n}$ is an enumeration of $R^\ue(z_0)\cap\langle u\rangle^m$ and  $U_{z_0}^{v_{\ell_{i}}}$ has been  already constructed,  repeat   \textsc{Step 2} by using $U_{v_0}^{v_{\ell_{i}}}$ as the initial set to obtain $U_{v_0}^{v_{\ell_{i+1}}} \subseteq  U_{v_0}^{v_{\ell_{i}}}$, with $U_{v_0}^{v_{\ell_{i+1}}}\in u$. 
		Finally,  for each $w\in U_{z_0}^{v_{\ell_n}} $ put 
		$f '_{w,m}: = f_{w,m}\cup \{\langle z_0,  s_{w, z_0}\rangle\}$. Similarly to  \textsc{Step 3} of Example \ref{exm0}, we can obtain $V_{z_0}\subseteq U_{z_0}^{v_{\ell_{n}}}$, such that $f '_{w,m}: \< u\>^m\cup \{z_0\}\to \< w\>^m\cup \{s_{w, z_0}\}$ is an isomorphism for all $w\in V_{z_0}$.\\

		\noindent \textsc{Step 4:} We can further iterate the process described in \textsc{Step 3}. Consider the next element in $R^\ue(e_0) =\{z_0,\dots, z_h\}$. From  the previously obtained set we get $V_{z_1}\subseteq V_{z_0}$, where $V_{z_1}\in u$ and for all $w\in V_{z_1}$ the function $f'_{w,m}\cup \{\<z_1,s_{w,z_1}\>\}$ is an isomorphism. Hence, after $h$-step we reach $V_{z_h}\in u$.  Then we may repeat all these construction with all the other  endpoints $e_1,\dots, e_r$ from $\langle u\rangle^m$ in order to obtain $U_{m+1}\in u$ and isomorphisms $f_{w,m+1}:\langle u\rangle^{m+1}\to \langle w \rangle^{m+1}$,  for each $w\in U_{m+1}$. By construction it will always be the case that $f_{w,m+1}(v_i)\in D_{v_i}$.\\

		\noindent \textsc{Step 5:} After constructing $U_{m+1}$, we can simply repeat \textsc{Step 1 -- Step 4} to construct $U_{n}\in u$ by using  $U_{m+i}$ as the initial set from which we obtain $U_{m+i+1}$. Finally, we conclude that $ \{w\in W: \langle w\rangle^n \cong \langle u\rangle^n\}\in u $, as $U_n \subseteq \{w\in W: \langle w\rangle^n \cong \langle u\rangle^n\}$ and $U_n\in u$.
	\end{proof}
	Now we derive some important consequences of Theorem \ref{thm2} regarding modal and first-order model theoretic properties. As a direct corollary we have the following.
	\begin{thm} For any bounded graph $\F$ and ultrapower $^I\F/D$, $\Lambda(\F) =\Lambda(\uf)=\Lambda(^I\F/D)$.
		\begin{proof}
			The $\Lambda(\uf) \subseteq \Lambda(\F)$ part always holds as was mentioned earlier.  For the other direction we refer to the notion of  modal depth and $n$-bisimulation from \cite{black}.  Let $\uf\not\Vdash\varphi$ and  $\text{dp}(\varphi)=n$ be the modal depth of $\varphi$.  Then for some valuation $V$ over $\Uf(W)$ and $u\in \Uf(W)$, we have that $\langle\uf,V\rangle, u\not\Vdash\varphi$.  By Theorem \ref{thm2} there is some $w\in W$, such  that $\langle w \rangle^n\cong\langle u\rangle^n$.   Now, let  $U$ be a valuation over $\F$, such that for all  $s\in\langle w\rangle^n$ and $v\in \langle u\rangle^n$ if $s\mapsto v$, then
			\[s\in U(p)\Leftrightarrow v\in V(p).\]
			From the isomorphism of the two $n$-Hulls  it is easy to construct an  $n$-bisimulation between $s$ and $v$.  As a consequence of Proposition 2.31 from \cite{black}, we get that $\langle\F,U\rangle, s\not\Vdash\varphi$ and hence $\Lambda(\F) =\Lambda(\uf)$.
			
			Since $\Sigma_1^1$-formulas are preserved under ultraproducts (cf. \cite{chang}), by the standard translation of modal formulas into the $\Pi_1^1$-fragment,  $\Lambda(^I\F/D)\subseteq \Lambda(\F)$ holds for any structure.  For the other direction a similar $n$-bisimulation argument applies with the aid of  \L o\'s's Lemma.
		\end{proof}
	\end{thm}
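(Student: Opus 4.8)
The plan is to prove all three logics equal by chaining four inclusions, two of which are free and two of which rest on a bounded-bisimulation argument. First I would note that the equivalence (\ref{truth=member}) gives $\Lambda(\F)\subseteq\Lambda(\uf)$ for free, so on the frame/extension side only $\Lambda(\uf)\subseteq\Lambda(\F)$ needs work. On the ultrapower side, since frame validity $\F\Vdash\varphi$ is, after the standard translation and a universal quantification over valuations, a $\Pi_1^1$ sentence, its negation $\F\not\Vdash\varphi$ is $\Sigma_1^1$; as $\Sigma_1^1$-sentences go up under ultraproducts (cf.\ \cite{chang}), $\F\not\Vdash\varphi$ transfers to $^I\F/D$, yielding $\Lambda(^I\F/D)\subseteq\Lambda(\F)$ for any structure.

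For the crucial inclusion $\Lambda(\uf)\subseteq\Lambda(\F)$ I would argue by contraposition. Suppose $\uf\not\Vdash\varphi$ and set $n=\mathrm{dp}(\varphi)$; fix a valuation $V$ on $\Uf(W)$ and an ultrafilter $u$ with $\langle\uf,V\rangle,u\not\Vdash\varphi$. Theorem \ref{thm2} hands me a witness $w\in W$ with $\langle w\rangle^n\cong\langle u\rangle^n$ whenever $u$ is non-principal; for $u=\pi_w$ I would instead take the generator $w$ itself, using $\F\unlhd\uf$ together with its $\deg^-$-dual to see that the $R^\ue$-neighbourhood of $\pi_w$ in both directions is purely principal, so that $\langle\pi_w\rangle^n=\{\pi_v:v\in\langle w\rangle^n\}\cong\langle w\rangle^n$ via $\eta$. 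I then transport $V$ across the isomorphism of $n$-Hulls to a valuation $U$ on $\F$ that agrees with $V$ on matched points, its values off $\langle w\rangle^n$ being irrelevant to depth-$n$ truth. The isomorphism directly induces an $n$-bisimulation between $(\langle\F,U\rangle,w)$ and $(\langle\uf,V\rangle,u)$, and since modal formulas of depth $\le n$ are invariant under $n$-bisimulation (Proposition 2.31 of \cite{black}), $\langle\F,U\rangle,w\not\Vdash\varphi$, i.e.\ $\varphi\notin\Lambda(\F)$. With the free inclusion this gives $\Lambda(\F)=\Lambda(\uf)$.

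The remaining inclusion $\Lambda(\F)\subseteq\Lambda(^I\F/D)$ runs the same scheme with \L o\'s's Lemma in place of Theorem \ref{thm2}. Boundedness is first-order (for a fixed bound $k$ one asserts that every $x$ has at most $k$ out- and in-neighbours), so $^I\F/D$ is again bounded, and for fixed $n$ the statement that ``$x$ has a prescribed $n$-Hull'' is a single $\mathcal{L}_R$-formula. Hence if $^I\F/D\not\Vdash\varphi$ at some $[f]_D$ under a valuation $V^*$, \L o\'s's Lemma produces $w=f(i)\in W$ (for $D$-many $i$) with $\langle w\rangle^n\cong\langle[f]_D\rangle^n$; transferring $V^*$ and invoking $n$-bisimulation invariance once more gives $\F\not\Vdash\varphi$. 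Together the four inclusions yield $\Lambda(\F)=\Lambda(\uf)=\Lambda(^I\F/D)$.

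I expect the main obstacle to be the core inclusion $\Lambda(\uf)\subseteq\Lambda(\F)$: specifically, making rigorous the passage from a mere isomorphism of $n$-Hulls to a genuine $n$-bisimulation, carefully stratifying the matching by remaining modal depth so the back-and-forth conditions hold, and cleanly dispatching the principal-ultrafilter case, to which Theorem \ref{thm2} does not apply, via $\F\unlhd\uf$ and its $\deg^-$-dual.
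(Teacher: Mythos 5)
Your proposal follows essentially the same route as the paper: the truth lemma (\ref{truth=member}) supplies the ``free'' frame-side inclusion, Theorem \ref{thm2} plus a transported valuation and $n$-bisimulation invariance (Proposition 2.31 of \cite{black}) supplies the hard one, $\Sigma_1^1$-preservation under ultraproducts gives $\Lambda(^I\F/D)\subseteq\Lambda(\F)$, and \L o\'s's Lemma applied to the first-order $n$-Hull formulas gives the converse. The one defect is that you have systematically swapped the names of the two frame-side inclusions. The equivalence (\ref{truth=member}) does \emph{not} give $\Lambda(\F)\subseteq\Lambda(\uf)$ for free: that inclusion is the canonicity-like direction, exactly the one that fails in general (Example \ref{ex}) and the one this theorem is about. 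What the truth lemma actually gives is the anti-preservation $\Lambda(\uf)\subseteq\Lambda(\F)$: if $\langle\F,V\rangle,w\not\Vdash\varphi$, then $V(\varphi)\notin\pi_w$, so $\langle\uf,V^{\ue}\rangle,\pi_w\not\Vdash\varphi$. Dually, the implication you prove in your second paragraph --- from $\uf\not\Vdash\varphi$ conclude $\F\not\Vdash\varphi$ --- is the contrapositive of $\Lambda(\F)\subseteq\Lambda(\uf)$, not of $\Lambda(\uf)\subseteq\Lambda(\F)$ as you announce. Since the two implications you establish are precisely the two that are needed, the equality $\Lambda(\F)=\Lambda(\uf)$ does follow and the error is one of labelling rather than substance; but as written, each inclusion is credited to the wrong argument, and the standalone claim that (\ref{truth=member}) yields $\Lambda(\F)\subseteq\Lambda(\uf)$ is false. (You may have inherited this from the paper's own preliminaries, which contain the same typo; the paper's proof of the theorem itself has the directions right.) On the positive side, your explicit treatment of principal ultrafilters via $\F\unlhd\uf$ and its $\deg^-$-dual is a genuine improvement: Theorem \ref{thm2} is stated only for $u\in\Uf^*(W)$, and the paper's proof silently skips the case $u=\pi_w$, which your argument cleanly dispatches.
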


	\noindent	The next lemma is  needed to prove Theorem \ref{thm4}.
	
	\begin{lem}\label{lem1}
		Let $^I\F/D$ be an ultrapower of $\F$ over an $\aleph_1$-incomplete ultrafilter $D$. Then for each non-diagonal element $[a]_D\in\,^IW/D$, there is an $X\subseteq \Uf(W)$  with $|X| = {{\aleph_0}}$,  such that for each $u,v\in X$  it holds that $\<v\>^\omega\cong\langle [a]_D\rangle^\omega \cong \langle u\rangle^\omega$ and $ \<u\rangle^\omega\cap\<v\rangle^\omega=\emptyset$.

		\begin{proof}

			Fix some $[a]_D$. By \L o\'s's Lemma for each $n\in \omega$ we get $A_n =\{i\in I: \F\models\varphi_{\<[a]_D\>^n}(a(i))\}\in D$.   Let $A_n^\partial = \{a(i): i\in A_n\}$. Since $[a]_D$ is non-diagonal, each $A_n^\partial$ is infinite.  Therefore, we can	
			consider an injective choice function $f:\omega\to \bigcup_{n\in\omega }A_n^\partial$, that is $f(n)\in A_n^\partial $ and  for all $n\neq m$ we have $f(n)\neq f(m)$. Hence $Y=\{f(n): n\in \omega\}$ is infinite and by Pospí\v{s}il's Theorem there are $2^{2^{\aleph_0}}$ ultrafilters over $Y.$ For each $v^*\in  \Uf(Y)$ fix  one $v\in \Uf(W) $ extending $v^*$, i.e. $v^*\subseteq v$.
			By Theorem \ref{thm2} we obtain
			\[\uf\models \varphi_{\<[a]_D\>^n}(v) \text{ and }  \uf\models \varphi_{\<v\>^n}(v),\]
			
			\noindent 	for all $n\in \omega$. Now, for such a fixed $v\in \Uf(W)$ consider the set $p_{v} =\{\varphi_{\<v\>^n}(x): n\in \omega\}$. Then it is clear  that $^I\F/D\models p_{v}([a]_D)$.  It should be routine to verify  that  in fact $\<[a]_D\>^\omega\cong \< v\>^\omega$, using the assumption of $D$ being $\aleph_1$-incomplete, however  we give a short sketch. 
			
			Since $D$ is $\aleph_1$-incomplete there is a sequence $\< I'_n : n\in \omega\>$ such that $I'_n\supseteq I'_{n+1}$ with $I'_n\in D$ and $\bigcap_{n<\omega}I'_n =\emptyset$.  Consider the decreasing sequence
			\begin{align*}
				I_0& = I'_0,\\
				I_{n+1} &= I'_n \cap A_n.
			\end{align*}
			Then,  for all $i\in I$, there is an $m(i)$ such that $i\in I_{m(i)}\setminus I_{m(i)+1}$. 	For each $n\in \omega$ we let $J_n = \{i\in I: m(i)= n\}$.  Now, since for all $j\in J_n$ the set $\<a(j)\>^{n}$ is finite, say has $n_J\in \omega$ elements, we can construct choice functions $a_1, \dots, a_{n_J-1}: J_{n} \to W$ such that for $j\in J_n$ the set $A_{n,j} =\{a(j), a_1(j),\dots, a_{n_J-1}(j)\}$ has the property \[ \exists f_{n,j} : \<u\>^{n}\cong \< A_{n,j},\  R\upharpoonright A_{n,j}\>. \]
			Pick $z\in \< v\>^\omega$, we  define  the corresponding choice function $a_z: I\to W$ as follows:
		
			\begin{align*}
				a_z(i)=
				\begin{cases}
					f_{m(i),i}(z) &\text{ if }   z\in \<v\>^{m(i)} \\
					b & \text{ otherwise} 
				\end{cases}
			\end{align*}
			where $b$ is arbitrary, but fixed. Then the map $\eta:\<[a]_D\>^\omega \to \<u\>^\omega$ defined by $\eta([a_z]_D)= z$ is an isomorphism.

			Since $|\Uf(Y)| = 2^{2^{\aleph_0}}$, and  for each $v^*\in \Uf(Y)$ we have that $|\<v\>^\omega| \leq \aleph_0$,  by a simple   cardinality argument there must be a subset  $X\subseteq \Uf(W)$ satisfying the conditions of the present lemma.
		\end{proof}

	\end{lem}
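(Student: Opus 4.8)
The plan is to realise the $\omega$-Hull of $[a]_D$ as the $\omega$-Hull of continuum-many ultrafilters over $W$, and then thin this family to a countably infinite subfamily with pairwise disjoint Hulls. First I would record, for each $n\in\omega$, the level set $B_n=\{w\in W:\langle w\rangle^n\cong\langle[a]_D\rangle^n\}=\{w\in W:\F\models\varphi_{\langle[a]_D\rangle^n}(w)\}$. By \L o\'s's Lemma the set $A_n=\{i\in I:\F\models\varphi_{\langle[a]_D\rangle^n}(a(i))\}$ lies in $D$, and $a[A_n]\subseteq B_n$. Since $[a]_D$ is non-diagonal each $a[A_n]$ must be infinite: were it finite, $A_n$ would be covered by finitely many fibres $\{i:a(i)=w\}$, and as $D$ is an ultrafilter one of them would lie in $D$, forcing $[a]_D=[c_w]_D$. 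Because the $B_n$ decrease with $n$, I can choose distinct $y_n\in a[A_n]$, obtaining an infinite set $Y=\{y_n:n\in\omega\}\subseteq W$ every tail $\{y_k:k\geq n\}$ of which is contained in $B_n$.

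Next I would invoke Pospí\v{s}il's Theorem to get $2^{2^{\aleph_0}}$ non-principal ultrafilters over $Y$, and for each such $v^*$ fix an extension $v\in\Uf(W)$; distinct $v^*$ yield distinct $v$ (since $v$ determines its restriction to $Y$), and each $v$ is non-principal over $W$. For every $n$ the tail $\{y_k:k\geq n\}$ is cofinite in $Y$, hence belongs to $v^*$, so $B_n\in v$. Applying Theorem \ref{thm2} to $v$ gives $\{w:\langle w\rangle^n\cong\langle v\rangle^n\}\in v$; intersecting this with $B_n\in v$ produces a witness $w$ with $\langle v\rangle^n\cong\langle w\rangle^n\cong\langle[a]_D\rangle^n$. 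Thus every such $v$ satisfies $\langle v\rangle^n\cong\langle[a]_D\rangle^n$ for all $n\in\omega$ simultaneously.

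The \emph{main difficulty} is passing from these level-by-level isomorphisms to a single isomorphism of the full $\omega$-Hulls, because the isomorphism chosen at level $n+1$ need not extend the one at level $n$. Here boundedness is decisive: each $\langle[a]_D\rangle^n$ and $\langle v\rangle^n$ is finite, so there are only finitely many isomorphisms $\langle[a]_D\rangle^n\to\langle v\rangle^n$, and any such isomorphism at level $n+1$ restricts to one at level $n$, since the $n$-Hull is exactly the set of elements at road-distance $\leq n$ from the (root-preserving) base point. Ordering all these isomorphisms by restriction gives a finitely branching tree that is non-empty at every level, so K\"onig's Lemma yields a coherent chain whose union is an isomorphism $\langle v\rangle^\omega\cong\langle[a]_D\rangle^\omega$. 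Alternatively, the hypothesis that $D$ is $\aleph_1$-incomplete lets one build this isomorphism explicitly as an inverse limit along a decreasing sequence from $D$ with empty intersection, which is the route I would take to keep the argument self-contained.

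Finally I would extract $X$ by a cardinality argument, using that $\uf$ is itself bounded (by Propositions \ref{prop0} and \ref{prop2} together with the duality remark), so that every Hull $\langle v\rangle^\omega$ is countable. Writing $\mathcal{V}$ for the family of $v$ constructed above, $|\mathcal{V}|=2^{2^{\aleph_0}}$, while for each $v$ the set $\langle v\rangle^\omega$ is countable, and any $u$ with $\langle u\rangle^\omega\cap\langle v\rangle^\omega\neq\emptyset$ satisfies $u\in\langle p\rangle^\omega$ for some $p\in\langle v\rangle^\omega$, so there are only countably many such $u$. A recursion of length $\omega$ then selects $v_0,v_1,\dots\in\mathcal{V}$ with each $v_n$ avoiding the countable set $\bigcup_{i<n}\bigcup_{p\in\langle v_i\rangle^\omega}\langle p\rangle^\omega$; this is possible since $\mathcal{V}$ remains uncountable at every stage. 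The resulting $X=\{v_n:n\in\omega\}$ has cardinality $\aleph_0$, pairwise disjoint $\omega$-Hulls, and $\langle u\rangle^\omega\cong\langle[a]_D\rangle^\omega\cong\langle v\rangle^\omega$ for all $u,v\in X$, as required.
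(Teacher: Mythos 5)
Your proof is correct, and at the crucial step it takes a genuinely different route from the paper's. The two arguments agree up to the level-by-level claim that $\langle v\rangle^n\cong\langle [a]_D\rangle^n$ for every $n$: \L o\'s's Lemma giving $A_n\in D$, infiniteness of $a[A_n]$ (which you justify via fibres, while the paper merely asserts it), Posp\'{\i}\v{s}il's Theorem, extension of each $v^*$ to some $v\in\Uf(W)$, and Theorem \ref{thm2} combined with $B_n\in v$. Here you are in fact more careful than the paper: you restrict to non-principal $v^*$, which is needed, since for a principal $v^*$ the tails of $Y$ need not lie in $v$ and the claim $\uf\models\varphi_{\langle[a]_D\rangle^n}(v)$ for all $n$ can fail; this costs nothing, as only countably many $v^*$ are principal. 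The divergence is in how the level-$n$ isomorphisms are glued into a single isomorphism $\langle v\rangle^\omega\cong\langle[a]_D\rangle^\omega$. The paper does this by an explicit construction that uses the $\aleph_1$-incompleteness of $D$: it fixes a decreasing chain of sets in $D$ with empty intersection and builds, for each $z\in\langle v\rangle^\omega$, a representative $a_z\in{}^IW$ out of local isomorphisms $f_{m(i),i}$ of finite Hulls, checking that $[a_z]_D\mapsto z$ is an isomorphism; the ``alternative'' you mention only in passing (an inverse-limit construction along a decreasing chain from $D$) is exactly this route. You instead observe that a root-preserving isomorphism of $(n+1)$-Hulls restricts to one of $n$-Hulls (the $n$-Hull of the root being intrinsic to the induced structure on the $(n+1)$-Hull), so these isomorphisms form an infinite, finitely branching tree, and K\"onig's Lemma yields a coherent chain whose union is the desired isomorphism; the finiteness of the levels comes from boundedness of the ultrapower, which follows from \L o\'s's Lemma since ``all degrees are at most $N$'' is first-order, and this is worth stating since it also underwrites the existence of $\varphi_{\langle[a]_D\rangle^n}$. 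Your route is more elementary and, notably, nowhere uses the $\aleph_1$-incompleteness of $D$, so it proves the lemma for arbitrary ultrapowers and shows that hypothesis is superfluous here; what the paper's construction buys is explicit representatives for the elements of $\langle[a]_D\rangle^\omega$, in the style of the saturation argument of Theorem \ref{thm4} where the same computation is reused. Your final extraction of $X$ (disjointness via the symmetry $p\in\langle u\rangle^\omega\Leftrightarrow u\in\langle p\rangle^\omega$, countability of Hulls from boundedness of $\uf$, and a greedy $\omega$-step recursion against $2^{2^{\aleph_0}}$-many candidates) is precisely the ``simple cardinality argument'' the paper leaves implicit.
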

	
	We recall the  basic model theoretic facts that if $\mathfrak{A}\leq \mathfrak{B}$, and for each finite sequence $a_0,\dots, a_n\in A$ and $b\in B$ there is an automorphism $\eta: \mathfrak{B}\to \mathfrak{B}$ fixing $\overline{a}$ and moving $b$ into $A$ (i.e.  $\eta(a_i)=a_i$ and $\eta(b)\in A$), then $\mathfrak{A}\preceq\mathfrak{B}$.
	Also, if $\mathfrak{A}\preceq \mathfrak{C}$,  $\mathfrak{A}\leq \mathfrak{B}$ and $\mathfrak{B}\preceq \mathfrak{C}$, then $\mathfrak{A}\preceq\mathfrak{B}$.
	\begin{thm}\label{thm4}
		For any bounded graph $\F$, there exists an ultrapower $^I\F/D$ such that $\F\preceq\uf\preceq\,^I\F/D$.
	\end{thm}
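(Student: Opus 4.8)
The plan is to fix one sufficiently large ultrapower and then read off $\F\preceq\uf$ from the two model-theoretic facts recalled above. Concretely, I would take $D$ to be a regular (hence $\aleph_1$-incomplete) ultrafilter over an index set $I$ with $|I|\ge 2^{2^{|W|}}$, so that ${}^I\F/D$ is $\aleph_1$-incomplete and $\kappa$-saturated for some $\kappa$ exceeding the number of connected components of $\uf$. By \L o\'s's Lemma the diagonal embedding $w\mapsto[c_w]_D$ witnesses $\F\preceq{}^I\F/D$, and the canonical embedding $\eta\colon w\mapsto\pi_w$ witnesses $\F\le\uf$. Hence, once I establish $\uf\preceq{}^I\F/D$ by an embedding sending $\pi_w\mapsto[c_w]_D$ (so that it composes with $\eta$ to the diagonal map), the second recalled fact immediately yields $\F\preceq\uf$, and therefore $\F\preceq\uf\preceq{}^I\F/D$.

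The heart of the argument is thus to embed $\uf$ into ${}^I\F/D$ as a substructure whose image is a union of connected components. Since $\F$ is bounded it is both $\deg^+$- and $\deg^-$-finite, so by Theorem \ref{8:7} and its dual (from the Remark) both $\eta[\F]\unlhd\uf$ and the diagonal copy of $\F$ is a generated substructure of ${}^I\F/D$; consequently every connected component (a set of the form $\langle w\rangle^\omega$ in the underlying undirected graph) is either entirely principal/diagonal or entirely non-principal/non-diagonal, and every component is countable because $\F$ is bounded. I would define the embedding $j$ componentwise: each principal component $\langle\pi_w\rangle^\omega=\eta[\langle w\rangle^\omega]$ maps by the diagonal onto the component of $[c_w]_D$, giving a bijection between principal components of $\uf$ and diagonal components of ${}^I\F/D$. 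For each non-principal $u$ the $\omega$-hull type $p_u=\{\varphi_{\langle u\rangle^n}(x):n\in\omega\}$ is finitely satisfiable in $\F$ by Theorem \ref{thm2}, hence realized in the saturated ${}^I\F/D$ by a non-diagonal element whose $\omega$-hull is isomorphic to $\langle u\rangle^\omega$; I map $\langle u\rangle^\omega$ isomorphically onto that component. Choosing these target components injectively (type by type) makes $j$ a graph embedding whose image is a union of components and which sends $\pi_w$ to $[c_w]_D$.

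With $j$ in hand I would prove $\uf\preceq{}^I\F/D$ through the automorphism criterion (the first recalled fact). Given finitely many $a_1,\dots,a_k\in j[\uf]$ and an arbitrary $b\in{}^I\F/D$, if $b$ lies in the same component as some $a_i$ then, the image being component-closed, $b\in j[\uf]$ and the identity works. Otherwise $b$ is non-diagonal, and by Lemma \ref{lem1} there are $\aleph_0$ pairwise disjoint hulls $\langle v\rangle^\omega\cong\langle b\rangle^\omega$ with $v\in\Uf(W)$; these yield infinitely many distinct components of $j[\uf]$ isomorphic to $\langle b\rangle^\omega$, so one of them, say $C$, avoids the finitely many components meeting $a_1,\dots,a_k$ and avoids $\langle b\rangle^\omega$. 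The graph automorphism of ${}^I\F/D$ that swaps $\langle b\rangle^\omega$ with $C$ along the isomorphism and fixes everything else then fixes $a_1,\dots,a_k$ and carries $b$ into $j[\uf]$, as required.

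The step I expect to be the genuine obstacle is the construction of $j$, specifically the multiplicity bookkeeping: to inject all non-principal components of $\uf$ into distinct non-diagonal components of ${}^I\F/D$ I must know that each $\omega$-hull type realized in $\uf$ is realized in the ultrapower in at least as many pairwise disjoint components. This is where saturation does the real work: using that in a bounded graph the ball $\bigcup_{t\in F}\langle t\rangle^n$ around any finite set $F$ is finite, the type ``realize the given $\omega$-hull type and stay at distance $>n$ from all specified parameters'' is finitely satisfiable in $\F$ for every truncation, so a transfinite realization argument in the $\kappa$-saturated ${}^I\F/D$ produces $\kappa$-many pairwise disjoint copies of each realized type, comfortably more than the at most $2^{2^{|W|}}$ components of $\uf$. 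Verifying that $j$ is simultaneously an embedding, componentwise isomorphic, and compatible with the diagonal map is then routine once this counting is secured.
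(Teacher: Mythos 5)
Your proposal is correct and takes essentially the same route as the paper's own proof: a suitably saturated ultrapower over an $\aleph_1$-incomplete (regular) ultrafilter, an embedding of $\uf$ obtained by realizing the hull-types $\{\varphi_{\langle u\rangle^n}(x):n\in\omega\}$ with disjointness constraints so that components map onto components and principal ultrafilters go to the diagonal, elementarity of that embedding via the component-swapping automorphism criterion powered by Lemma \ref{lem1}, and the final transfer of $\preceq$ back to $\F$ by the two recalled facts. The only difference is organizational --- you allocate target components type-by-type with a cardinality bookkeeping argument, while the paper runs a single transfinite recursion over an enumeration of $\Uf^*(W)$ with inequality constraints built into each type --- and your explicit handling of the case where the new element lies in a component of the image is, if anything, slightly more careful than the paper's.
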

	\begin{proof}
		Let $\F=\langle W,R\rangle$ be any bounded graph with $\lambda = |W|$, and set $\kappa =  2^{2^\lambda} = |\Uf(W)|$. Let $^I\F/D= \langle W^*, R^*\rangle$ be any $\kappa^+$-saturated ultrapower of $\F$ over an $\aleph_1$-incomplete ultrafilter $D$. We claim that $\uf\preceq\,^I\F/D$. First, we show the embedding $\uf \leq\,^I\F/D$. 
		As a consequence of Theorem \ref{8:7} and its dual, for each $w\in W$ we have: 
		\[\langle w\rangle^\omega\cong\langle \pi_w\rangle^\omega\cong \langle [c_w]_D\rangle^\omega.\]
		Hence, we let  $f_0:\uf\to\, ^I\F/D$ to be the partial isomorphism,  where $f_0(\pi_w) = [c_w]_D$. Consider an enumeration $\{u_\gamma\in \Uf^*(W): \gamma <\kappa\}$.
		For $\gamma <\kappa$ we construct  a sequence of  partial isomorphisms $f_\gamma : \uf \to\, ^I\F/D$, such that:
		
		\begin{enumerate}
			\item[a)] $f_\alpha \subseteq f_{\gamma}$, for $\alpha < \gamma$,
			\item[b)] $\langle u_\gamma\rangle^\omega\subseteq \mathrm{dom}(f_{\gamma+1})$, 
			\item[c)] $|\mathrm{dom}(f_\gamma)|  \leq |\gamma| +\aleph_0$,
			\item[d)] $\langle u_\gamma\rangle^\omega \cong \langle f_{\gamma+1}(u_\gamma)\rangle^\omega$,
			\item[e)] $f_\gamma = \bigcup_{\alpha<\gamma}f_\alpha$ if $\gamma$  is limit.
		\end{enumerate}
		Assume that, for some $\beta<\kappa$ we have already defined the  map $f_\beta$, so let us now define $f_{\beta+1}$.  There are two cases: If $u_\beta \in \mathrm{dom}(f_\alpha)$ for some $\alpha < \beta$, then let $f_{\beta +1} = f_\beta$. Otherwise, $u_\beta \not\in \bigcup_{\alpha<\beta}\dom(f_\alpha)$. Observe that in this situation  no $u_\alpha\in\langle u_\beta\rangle^\omega$ is mapped by any previously defined $f_\alpha$ to $^I\F/D$.  
		
		Consider the set $X =\bigcup_{\alpha<\beta} \<f_{\alpha+1}(u_\alpha)\>^\omega$.  We  define the type:
		\[p_{u_\beta}(x)
		= \{\varphi_{\langle u_\beta\rangle^n}(x): n\in\omega\} \cup \{x\neq c: c\in X \} \]
		
		\noindent with parameters from $X$. Since $|X| <\kappa^+$, we have that $|p_{u_\beta}| <\kappa^+$ and since $^I\F/D$ is $\kappa^+$-saturated, we can find an element  $[a]_D\in \,W^*$, such that $^I\F/D\models p_u([a]_D)$. Using the same argument from Lemma \ref{lem1} we can conclude that $\<[a]_D\>^\omega\cong \< u\>^\omega$ and $\<[a]_D\>^\omega\cap X=\emptyset$.  Let us denote this isomorphism by $\eta$ and define $f_{\beta+1}$ as
		
		\[f_{\beta+1}= \bigcup_{\alpha\leq \beta} f_\beta \cup \{\<u,\eta(u)\>: u\in \<u_\beta\>^\omega  \}.\] 
		It is easy to see that $f_{\beta+1}$ indeed satisfies the conditions a)-e). Finally, we set $f = \bigcup_{\gamma <\kappa}f_\gamma$. It is almost trivial that

		\begin{enumerate}
			\item[•] $\mathrm{dom}(f) = \Uf(W)$,
			\item[•] $f$ is an embedding. 
		\end{enumerate}
		Now, we show that $\F^\ue\preceq\, ^I\F/D$. Pick $f(u_0),\dots, f(u_n)\in W^*$ and $[a]_D\in W^*$.  By Lemma \ref{lem1}, there will be $v_0,\dots, v_{n+1}\in \Uf(W)$ such that for $i\leq n+1$ it holds that  $\langle f(v_i)\rangle^\omega\cong \langle [a]_D\rangle^\omega$  and  $\langle f(v_i)\rangle^\omega\cap \langle f(v_\ell)\rangle^\omega=\emptyset$. Hence, for  at least one such $v_i$ we must have that  $\langle f(v_i)\rangle^\omega\cap \langle f(v_j)\rangle^\omega=\emptyset$, for  $i\neq j\leq n$. Then  let $\eta$  be the automorphism that isomorphically permutes each element of $\langle [a]_D\rangle^\omega$ into $\langle f(v_i)\rangle^\omega$ and fixes every other pointwise. Since $\langle f(v_i)\rangle^\omega\cong \langle [a]_D\rangle^\omega$, the resulting permutation is an automorphism moving $[a]_D$ into $f[\F^\ue]$.
		Hence,  we obtain that  $\F\preceq\uf\preceq\, ^I\F/D$.
	\end{proof}

	\section{Discussion}
	Regarding the other type of ultrafilter extension,  \cite{Saveliev2} wonders about theories such that elementary equivalence of their models lifts to their
	ultrafilter extensions. Here, we have an immediate corollary:
	
	\begin{cor}
		For any bounded graphs $\F$ and $\mathfrak{G}$ if $\F\equiv \mathfrak{G}$, then $\uf\equiv \mathfrak{G}^\ue$.
	\end{cor}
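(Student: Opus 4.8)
The plan is to reduce everything to Theorem~\ref{thm4}, which already supplies the essential bridge between a bounded graph and its ultrafilter extension. The heavy lifting has been done there: Theorem~\ref{thm4} yields $\F\preceq\uf$ for \emph{any} bounded graph, and in particular $\F\equiv\uf$. Since only elementary equivalence is demanded by the present statement, I would not need the full strength of the elementary-substructure relation, merely the elementary equivalence it entails.

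First I would invoke Theorem~\ref{thm4} for the bounded graph $\F$ to obtain $\F\equiv\uf$. Applying the same theorem to the second bounded graph $\mathfrak{G}$ gives $\mathfrak{G}\equiv\mathfrak{G}^\ue$. Both applications are legitimate precisely because $\F$ and $\mathfrak{G}$ are assumed bounded, which is the only hypothesis required by Theorem~\ref{thm4}. I would then chain these facts together with the assumption $\F\equiv\mathfrak{G}$, using the symmetry and transitivity of $\equiv$:
\[
\uf\equiv\F\equiv\mathfrak{G}\equiv\mathfrak{G}^\ue,
\]
whence $\uf\equiv\mathfrak{G}^\ue$, as claimed.

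There is essentially no obstacle to overcome at this point; the entire content of the corollary is absorbed by Theorem~\ref{thm4}. The only matters requiring minimal care are confirming that the boundedness of \emph{each} graph is what licenses the two separate appeals to Theorem~\ref{thm4}, and that the relation being transported is plain first-order elementary equivalence (not a pointed or modal variant) — both of which are immediate.
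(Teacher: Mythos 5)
Your proof is correct and is precisely the argument the paper intends: the corollary is presented as an immediate consequence of Theorem~\ref{thm4}, which gives $\F\preceq\uf$ (hence $\F\equiv\uf$) for every bounded graph, and the conclusion $\uf\equiv\F\equiv\mathfrak{G}\equiv\mathfrak{G}^\ue$ follows by symmetry and transitivity of elementary equivalence. No gaps; your appeal to boundedness of each graph separately is exactly the right (and only) point of care.
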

	\noindent It is natural to ask what other type of infinite structures can be isolated within the class \[\mathsf{F}= \{\mathfrak{F}: \mathfrak{F}\equiv \uf\}\] besides  bounded graphs.

		We note that $\mathsf{F}$ is not an elementary class, since it is not closed under ultraproducts. To see this, consider the structures $\mathfrak{N}_n =\langle n, <\rangle$  with their standard orderings. Then for any non-principal ultrafilter $D$ over $\omega$, we have that $\prod_{n\in\omega}\mathfrak{N}_n/D \models  \forall x (x\not < x)$, but $\big(\prod_{n\in\omega}\mathfrak{N}_n/D\big)^\ue \models  \exists x( x < x)$, similarly to Example \ref{ex}. We wonder whether $\mathsf{F}$ is closed under elementary equivalence, that is given $\mathfrak{F}\in \mathsf{F}$ and $\mathfrak{F}\equiv \mathfrak{G}$ does it imply $\mathfrak{G}\in \mathsf{F}$?

		\subsubsection*{Acknowledgement}
		The author is grateful to Zalán Gyenis for all the pleasant conversations about the
		topic.  We thank the  anonymous reviewers for their many insightful comments that improved the paper substantially.
		Supported by the ÚNKP-20-3 New
		National Excellence Program of the Ministry for Innovation and Technology from
		the Source of the National Research, Development and Innovation Fund.

	\end{document}